\newcommand{\CC}{\mathbb{C}}
\newcommand{\cH}{\mathcal{H}}
\newcommand{\F}{\mathcal{F}}
\newcommand{\FF}{\mathbb{F}}
\newcommand{\NN}{\mathbb{N}}
\newcommand{\op}[1]{\operatorname{#1}}
\newcommand{\QQ}{\mathbb{Q}}
\newcommand{\RR}{\mathbb{R}}
\newcommand{\TT}{\mathbb{T}}
\newcommand{\ZZ}{\mathbb{Z}}
\newcommand{\ip}[2]{\langle #1 \vert #2 \rangle}
\providecommand{\abs}[1]{\lvert#1\rvert}
\DeclareMathOperator{\Aut}{Aut}
\DeclareMathOperator{\e}{e}
\DeclareMathOperator{\re}{Re}
\newtheorem{theorem}{Theorem}[section]
\newtheorem{lemma}[theorem]{Lemma}
\newtheorem{corollary}[theorem]{Corollary}
\theoremstyle{definition}
\newtheorem{topic}[theorem]{}
\theoremstyle{remark}
\newtheorem{remark}[theorem]{Remark}
\begin{document}
\title[A Poincare series on hyperbolic space]{A Poincare series on hyperbolic space}
\author{Tathagata Basak}
\address{Department of Mathematics\\Iowa State University\\Ames, IA 50010}
\email{tathagat@iastate.edu}
\urladdr{http://orion.math.iastate.edu/tathagat}
\keywords{Leech Lattice,Hyperbolic reflection group, Poincare series, Analytic continuation, Exponential sum}
\subjclass[2010]{%
Primary: 32N15
, 20F55
; Secondary: 11H56
, 11F55
}
\date{July 2, 2017}
\begin{abstract} Let $L$ be the unique even self-dual lattice of signature $(25,1)$.
The automorphism group $\operatorname{Aut}(L)$ acts on the hyperbolic space $\mathcal{H}^{25}$.
We study a Poincare series $E(z,s)$ defined for $z$ in $\mathcal{H}^{25}$,
convergent for $\operatorname{Re}(s) > 25$,
invariant under $\operatorname{Aut}(L)$ and having singularities along
the mirrors of the reflection group of $L$. We compute the Fourier
expansion of $E(z,s)$ at a ``Leech cusp" and prove that it can be meromorphically continued to
$\operatorname{Re}(s) > 25/2$.
Analytic continuation of Kloosterman sum zeta functions imply that
the individual Fourier coefficients of $E(z,s)$
have meromorphic continuation to the whole $s$-plane.
\end{abstract}
\maketitle
%
%
%
%
%
%
%
\section{Introduction}
\label{section-introduction}
Given positive integers $m$ and $n$ with $m  - n\equiv 0 \bmod 8$, let
$\mathrm{II}_{m,n}$ denote the unique even self-dual lattice of signature
$(m,n)$; see \cite{CS:SPLAG} chapter 26, 27.
For basic definitions about lattices, see
\cite{REB:L} or \cite{CS:SPLAG}. All the lattices considered here are
nonsingular unless otherwise stated. A bilinear form is usually denoted by $\ip{\;}{\;}$.
The lattice $\mathrm{II}_{1,1}$ is sometimes called a {\it hyperbolic cell}.
The even self-dual Lorentzian lattices are $\mathrm{II}_{8n + 1, 1}$. Among these,
the lattice $L = \mathrm{II}_{25,1}$ stands out
as exceptional  (see \cite{CS:SPLAG} or \cite{REB:Thesis}, \cite{REB:L}, 
for a wealth of information on $L$).
Of course this is related to the many exceptional properties of the
Leech lattice $\Lambda$. 
Indeed $ L \simeq \Lambda \oplus \mathrm{II}_{1,1}$.
Let $L(2)$ denote the set of norm $2$ vectors (also called {\it roots}) of $L$.
The roots form a single orbit under the automorphism group $\op{Aut}(L)$.
In this article, we study the Poincare-Weierstrass series
\begin{equation*}
E_L(z,s) = E(z,s) = \sum_{r \in L(2) } \ip{r}{z}^{-s}.
\end{equation*}
Here $z$ is a negative norm vector in $L \otimes \RR$ and
 $s$ is a complex
number.
The function $E(z,s)$ is real analytic in $z$. 
Complex analytic versions of this function
have appeared in Looijenga's study of compactifications of
complex ball quotients (see the functions $F^{(l)}_{\mathcal{O}}$ in lemma 5.4 of \cite{L:CDBA1}). 
The functions $F^{(l)}_{\mathcal{O}}$ are analogous to the 
Eisenstein series $\sum_{(m,n) \neq (0,0)} (m \tau + n)^{-2l}$ whereas
 $E(z,s)$ is analogous to real analytic Eisenstein series.
\par
The infinite series for $E(z,s)$ converges for $\op{Re}(s) > 25$
(For a conceptual proof, see lemma 5.4 of \cite{L:CDBA1}.
For the sake of completeness, we have included an elementary
proof in appendix \ref{l-convergence}).
So, for fixed $s$ with $\op{Re}(s) > 25$, we obtain a real analytic function
$z \mapsto E(z,s)$ on hyperbolic space
$\mathcal{H}^{25}$.
By definition, $E(z,s)$ is an automorphic function invariant under $\op{Aut}(L)$ with singularities
along the mirrors of the reflection group of $L$. 
Our objective is to compute the Fourier series for $E(z,s)$ and prove that
$E(z,s)$ can be analytically continued to a meromorphic function on $\op{Re}(s) > 25/2$.
\par
Much of what we say about $E_{\mathrm{II}_{25,1}}(z,s)$
can probably be generalized for all the even self-dual 
Lorentzian lattices $\mathrm{II}_{ 8 n + 1, 1}$
(at least if we assume $n \geq 3$, which would guarantee the existence of
a positive definite even self-dual lattice of rank $8n$ with no roots). 
However, in section \ref{section-II251} and \ref{section-Fourier-expansion}, while
studying $E_L(z,s)$, 
we have decided to restrict to the example $L = \mathrm{II}_{25,1}$ for several reasons.
First, it allows us to keep the exposition relatively simple.
Second, in view of moonshine, this is the most important example. 
Third, because of special properties of the Leech lattice, in particular, 
because of Conway's beautiful description of the automorphism group of $L$, 
this is also the nicest example. 
\par
To describe the form of the Fourier expansion of $E(z,s)$, we need to first
briefly recall Conway's description of the {\it reflection group} $R(L)$. By definition,
$R(L)$ is the subgroup of $\op{Aut}(L)$ generated by reflections in the roots of $L$.
Both $\op{Aut}(L)$ and $R(L)$ act on
the hyperbolic space
 $ B(L) = \lbrace x \in L \otimes \RR \colon x^2 < 0 \rbrace/ \RR^*
 \simeq \mathcal{H}^{25}$.
To describe a fundamental domain of $R(L)$ acting on $\mathcal{H}^{25}$,
choose a {\it Leech cusp} $\rho$:  this means that $\rho$ is a primitive 
norm zero vector of $L$ and $\rho^{\bot}/\rho \simeq \Lambda$.
This lets us split a hyperbolic cell from $L$ and 
identify $L = \Lambda \oplus \mathrm{II}_{1,1}$. So we write $v \in L$ in the form
$v = (\lambda; m,n)$ with $\lambda \in \Lambda$ and $m,n \in \ZZ$
with $v^2 = \lambda^2 - 2 m n$. In this co-ordinate system $\rho = (0; 0, 1)$.
The Leech cusp $\rho$ determines a point in $\partial \mathcal{H}^{25}$,
also denoted by $\rho$. The action of  $R(L)$ on $\mathcal{H}^{25}$
has a unique fundamental domain $C$ whose closure contains $\rho$.
We say that $C$ is the {\it Weyl chamber ``around" $\rho$}.
The walls of $C$ are in bijection with the vectors of Leech lattice. The angles
between these walls are determined by the inner products of the corresponding Leech lattice
vectors. This lets one describe $R(L)$ explicitly as a Coxeter group so
that
{\it ``the Leech lattice is the Dynkin diagram for $R(L)$"}.
(see \ref{t-C} or \cite{C:Aut26}, \cite{REB:Thesis} for details).
\par
Because the Leech lattice has no roots, there are no mirrors of $R(L)$
that pass through $\rho$. The subgroup $\TT$
of $\op{Aut}(L)$ that fix $\rho$ and act trivially on
$\rho^{\bot}/\rho$ is a free abelian group isomorphic to $\Lambda$.
We call $\TT$ the group of {\it translations}. 
We compute the Fourier series of $E(z,s)$ at $\rho$
with respect to the group of translations; see \ref{topic-continuous-group-of-translations},
\ref{topic-discrete-group-of-translations}, \ref{topic-setup-for-Fourier-series-computation}
 for details.
Fix positive real numbers $h,k$ such that $2h^2/k < 1$. Let $z \in L \otimes \RR$ 
be a vector of norm $-k$ having the form $z = (v h; h, *)$, where
$v \in \Lambda \otimes \RR$  and the last coordinate of $z$ is determined by the condition
$z^2 = -k$. Then $z$ (or rather its image in $\cH^{25}$)
is contained in the Weyl chamber $C$. So $z$ does not lie on any mirror.
The Fourier series of $E(z,s)$ has the form
\begin{equation*}
E((vh; h, *),s) = \sum_{\lambda \in \Lambda} a_{\lambda}(k,h,s)\exp( 2 \pi i \ip{\lambda}{v}).
\end{equation*}
Each coefficient in turn is an infinite series of the form
\begin{equation*}
a_{\lambda}(k,h,s) = \sum_{n = 1}^{\infty} j_{\lambda, n} n^{-s} g_{\lambda}(s,1/n)
\end{equation*}
where 
\begin{equation*}
j_{\lambda, n} = \sum_{l \in \Lambda/n \Lambda : l^2/2 \equiv 1 \bmod n} 
\exp( 2 \pi i \ip{l}{\lambda}/n) 
\end{equation*}
and $g_{\lambda}(s,y)$ is some function that is 
entire in $s$ and analytic for $y$ in a disc of radius bigger than $1$
and exponentially decaying as $\abs{\lambda}  \to \infty$; see theorem
\ref{th-fourier-coeffients}.
\par
Section \ref{section-j} is devoted to studying exponential sums of the form $j_{\lambda, n}$
for general even self-dual lattices of arbitrary signature. 
The results of section \ref{section-j} and related results in appendix \ref{appendix-gauss-sums}
are probably known to experts. But since we could not find a reference,
and we think these results may have applications in other contexts,
we have decided to include the details. In theorem \ref{th-j} we compute
the sums $j_{\lambda,n}$ in terms of Kloosterman sums.
Weil's bound on Kloosterman sums implies that the infinite series for $a_{\lambda}(k,h,s)$
converges for $\op{Re}(s) > 25/2$
and it follows that $E(z,s)$ can be meromorphically continued in this half plane; 
see theorem \ref{th-analytic-continuation-of-E}.
Further, using analytic continuation of Kloosterman sum zeta functions,
we prove that the Dirichlet series $\sum_{n} j_{\lambda,n} n^{-s}$ and
the individual Fourier coefficients $a_{\lambda}(k,h,s)$ have
meromorphic continuation to the whole $s$-plane 
(see \ref{l-analytic-continuation-of-Dirichlet-series-of-j} and
 \ref{remark-analytic-continuation-of-Fourier-coefficients}).
However, since we are unable to find bounds on these functions obtained after
analytic continuation, we cannot prove continuation of $E(z,s)$ beyond
the line $\op{Re}(s) = 25/2$.
\par
We have two motivations for studying $E(z,s)$. 
The first motivation is to find out if the residue of $E(z,s)$ at the ``first pole"
is related to some of the automorphic forms of type $O(25,1)$ constructed by taking Borcherds lift
(see \cite{REB:Aut}, section 10, in particular, the set up of example 10.7).
The second motivation is that the ``complex analytic versions of $E(z,s)$" are
relevant in projective uniformization of a $13$ dimensional complex  ball quotient
whose fundamental group is conjectured to be related to the monster simple group \cite{Allcock-monstrous}.
These complex analytic versions of $E(z,s)$ are instances of the functions considered in \cite{L:CDBA1}
and they are defined as
\begin{equation*}
E^{(l)}(z) = \sum \nolimits_{r \in L(3) } \ip{z}{r}^{-l}
\end{equation*}
where $L$ is the unique
hermitian $\ZZ[e^{2 \pi i/3}]$-lattice of signature $(13,1)$ satisfying $\sqrt{-3} L^{\vee} = L$,
the sum is over $L(3)$ which is the set of roots of $L$, the exponent
$l \in 6 \NN$ and $z$ is a negative norm
vector of $L \otimes_{\ZZ[e^{2 \pi i/3}]} \CC$. 
So $E^{(l)}(z)$ is a section of the tautological line bundle on the complex $13$-ball
$B(L) = \lbrace v \in L \otimes_{\ZZ[e^{2 \pi i/3}]} \CC \colon v^2 < 0 \rbrace/\CC^*$.
Thus $E^{(l)}(z)$ are meromorphic automorphic forms of type $U(13,1)$
invariant under $\op{Aut}(L)$ and having poles along
the mirrors of the complex hyperbolic reflection group $R(L)$. So 
\begin{equation*}
z \mapsto [E^{(30)}(z)/E^{(36)}(z) : E^{(36)}(z)/  E^{(42)}(z) : E^{(42)}(z)/E^{(48)}(z) :  \dotsb]
\end{equation*}
is a meromorphic map from the ball quotient 
$ (B(L) - \lbrace \text{mirrors of $R(L)$} \rbrace) / \op{Aut}(L)$ to the projective space.
Allcock's monstrous proposal states that the orbifold fundamental group of this ball quotient
surjects onto the monster; see  \cite{Allcock-monstrous}, \cite{AB-braidlike} and
the references in there for more details. The calculation of Fourier series of $E^{(l)}(z)$ is
complicated by the fact that the group of translations is a discrete Heisenberg group
rather than an abelian group. We think that the analysis of $E(z,s)$ is a warm-up exercise
for studying the functions $E^{(l)}(z)$.
\par
{\bf Acknowledgement:} I would like to thank Prof. Richard Borcherds
for many stimulating conversations at the beginning of this work.
I would like to thank Prof. Eric Weber and Prof. Manjunath Krishnapur for their help in a 
couple of proofs.
%
%
%
\section{Some exponential sums related to even lattices}
\label{section-j}
\begin{topic}{\bf Notation: }
\label{topic-notation}
We fix some notation that will be used throughout this section.
Let $p$ be a prime. Let $q = p^r$ for some integer $r \geq 1$.
Let $M$ be a nonsingular even lattice of rank $m$.
Except for lemma \ref{l-bound-on-magnitude-of-j},
we do not assume in this section that $M$ is necessarily positive definite.
 Let $n, d \in \ZZ$ and $n \geq 1$.
We abbreviate $M/n = M/n M$.
We define
\begin{equation*}
M_n(d) = \lbrace l \in M/n : l^2/2 \equiv d \bmod n \rbrace.
\end{equation*}
Let $\e(x) = \exp(2 \pi i x)$. Let
$\lambda \in M$. The first goal of this section is to prove theorem \ref{th-j} which lets us
calculate some exponential sums of the form 
\begin{equation*}
j_{\lambda, n}^M(d) = j_{\lambda, n}(d)  = \sum_{l \in M_n(d)} \e (\ip{l}{\lambda}/n)
\end{equation*}
in terms of Kloosterman sums 
\begin{equation*}
S(a,b, n) = \sum_{r \in \ZZ/n : \op{gcd}(r,n)=1 } \e((ar + b \bar{r})/n)
\end{equation*}
and Jordan totient function $J_k(n) = n^k \prod_{p \mid n} (1 - p^{-k} )$. 
As usual,  $\bar{r}$ is the inverse of $r$ modulo $n$, 
that is, $\bar{r}$ denotes any integer such that $r \bar{r} \equiv 1 \bmod n$.
In the definition of $J_k(n)$, the product is over all prime divisors of $n$.
Let $j_{\lambda, n} = j_{\lambda, n}(1)$.
As a consequence of \ref{th-j}, we obtain bounds on $\abs{j_{\lambda, n}}$
in \ref{l-bound-on-magnitude-of-j} and
show in \ref{l-analytic-continuation-of-Dirichlet-series-of-j}
that the Dirichlet series $\sum_{n = 1}^{\infty} j_{\lambda, n} n^{-s}$
can be analytically continued to a meromorphic function on the whole complex
plane.
\par
Given $\lambda \in M$, let $c(\lambda)$ be the largest positive integer such that
$\lambda \in c(\lambda) M$. So $\lambda$ is a primitive vector of $M$
if and only if $c(\lambda) = 1$.
We say $n$ divides $\lambda$ (resp. $n$ and $\lambda$ are relatively prime)
if $n$ divides $c(\lambda)$ (resp. if $n$ and $c(\lambda)$ are relatively prime).
If $p$ is a prime, we let $v_p(\lambda)$ be the $p$-valuation of $c(\lambda)$.
\end{topic}
\begin{theorem} Assume the setup of \ref{topic-notation}.
\par
(a) If $r, n$ are relatively prime positive integers,  then
$M_{n r}(d) \simeq M_r(d) \times M_n(d)$ and 
$j_{\lambda, nr}(d) = j_{\bar{n} \lambda, r}(d)  j_{\bar{r} \lambda, n}(d)$.
\par
(b) If $n \mid \lambda$, then $j_{\lambda, n} (d) = j_{0, n} (d) = \abs{M_n(d)}$.
If $p \nmid d$, 
then  $j_{ p \lambda, p q}(d)  = p^{m-1} j_{\lambda, q}  (d)$.
\par
(c) Assume that $M$ is self-dual and that $n$ and $\lambda$ are relatively prime.
Then
$j_{\lambda, n}(d)  = n^{(m/2) - 1} S(d, \lambda^2/2, n)$. 
\par
(d) Assume $M$ is self-dual. Then
$ j_{0, n}(1) = n^{(m/2) - 1} J_{m/2}(n)$.
\label{th-j}
\end{theorem}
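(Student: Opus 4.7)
The plan is to prove the four statements sequentially, using the Chinese remainder theorem for (a), a short fiber count for (b), and quadratic Gauss sum evaluations for (c) and (d).

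For part (a), $\op{gcd}(n, r) = 1$ makes the reduction map $M/nr \to M/r \times M/n$ a bijection, and the condition $l^2/2 \equiv d \pmod{nr}$ separates into the same condition mod $r$ and mod $n$, so $M_{nr}(d) \simeq M_r(d) \times M_n(d)$. Under the CRT presentation $l \equiv \bar n n l_1 + \bar r r l_2 \pmod{nr}$ of this bijection, $\ip{l}{\lambda}/(nr) \equiv \bar n \ip{l_1}{\lambda}/r + \bar r \ip{l_2}{\lambda}/n \pmod 1$, which factors the exponential and produces the product formula for $j_{\lambda, nr}(d)$.

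For part (b), the first claim is immediate: $n \mid \lambda$ gives $\lambda \in nM$, so $\ip{l}{\lambda}/n \in \ZZ$ and each exponential is $1$. For the second, $\ip{l}{p\lambda}/(pq) = \ip{l}{\lambda}/q$ depends only on $l \bmod q$, so I group $M_{pq}(d)$ by fibers of the reduction onto $M_q(d)$. A lift of $l_0 \in M_q(d)$ has the form $l = l_0 + q\mu$ with $\mu \in M/p$, and expanding $l^2/2 \bmod pq$ — using $p \mid q$ and $\mu^2 \in 2\ZZ$ — collapses the lifting condition to an affine linear equation $\ip{l_0}{\mu} \equiv -\alpha \pmod p$ on $M/p$, where $\alpha = (l_0^2/2 - d)/q$. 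A short parity check shows that $p \nmid d$ excludes $l_0 \in pM^\vee$ (for $p$ odd, $l_0 \in pM^\vee$ forces $l_0^2 \in 2p\ZZ$ and hence $l_0^2/2 \equiv 0 \pmod p$; the case $p = 2$ needs a similar refinement), so the linear functional is non-zero and each fiber has exactly $p^{m-1}$ elements; the identity follows by summing.

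For part (c), I write the indicator of $l^2/2 \equiv d \bmod n$ via additive characters:
\begin{equation*}
j_{\lambda, n}(d) = \frac{1}{n}\sum_{r \bmod n} \e(-rd/n) \, G(r), \qquad G(r) = \sum_{l \in M/n} \e\bigl(r l^2/(2n) + \ip{l}{\lambda}/n\bigr).
\end{equation*}
When $\op{gcd}(r, n) = 1$, the substitution $l \mapsto l - \bar r \lambda$ completes the square (leftover terms are integer multiples of $\lambda^2/2 \in \ZZ$ and so disappear inside $\e$), giving $G(r) = \e(-\bar r \lambda^2/(2n)) \sum_l \e(r l^2/(2n)) = \e(-\bar r \lambda^2/(2n))\, n^{m/2}$; the last equality is the standard Gauss sum evaluation for an even self-dual lattice. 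When $\op{gcd}(r, n) > 1$, I pick a prime $p$ in the gcd and observe that for each $\mu \in M/p$, the shift $l \mapsto l + (n/p)\mu$ is a bijection of $M/n$ that preserves the quadratic factor (by $p \mid r$ and evenness) but multiplies the linear factor by $\e(\ip{\mu}{\lambda}/p)$, so $G(r) = \e(\ip{\mu}{\lambda}/p)\, G(r)$. Since $M = M^\vee$ and $\op{gcd}(\lambda, n) = 1$ force $\lambda \notin pM$, some $\mu$ realizes a non-trivial phase and therefore $G(r) = 0$. Assembling and substituting $r \mapsto -r$ identifies the surviving sum as $S(d, \lambda^2/2, n)$.

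Part (d) is the specialization $\lambda = 0$, $d = 1$ of the same character-sum formula, but now $G(r)$ is non-zero for every $r$. For $g = \op{gcd}(r, n)$, $r = g r_0$, $n = g n_0$, invariance under $l \mapsto l + n_0 \mu$ ($\mu \in M/g$) collapses $G(r)$ to $g^m \sum_{l \in M/n_0} \e(r_0 l^2/(2n_0)) = g^m\, n_0^{m/2} = g^{m/2} n^{m/2}$. The classical Ramanujan identity $\sum_{\op{gcd}(r,n) = g} \e(-r/n) = \mu(n/g)$ together with the Möbius form $J_{m/2}(n) = \sum_{g \mid n} g^{m/2}\mu(n/g)$ then give $\abs{M_n(1)} = n^{m/2 - 1} J_{m/2}(n)$. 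The principal technical input, used in both (c) and (d), is the Gauss sum evaluation $\sum_{l \in M/n} \e(r l^2/(2n)) = n^{m/2}$ for even self-dual $M$ and $\op{gcd}(r, n) = 1$; this is where the self-dual hypothesis enters essentially and is handled in the paper's appendix on Gauss sums.
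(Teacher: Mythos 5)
Your proof is correct and rests on the same two pillars as the paper's: the Chinese remainder theorem for (a), a fiber count over an affine hyperplane in $M/p$ for (b), and for (c), (d) the combination of additive characters on $\ZZ/n$ (equivalently, Fourier inversion applied to $d \mapsto j_{\lambda,n}(d)$) with the Gauss sum evaluation $\sum_{l \in M/n} \e(r l^2/2n) = n^{m/2}$. The one organizational difference is in (c) and (d): the paper computes the Fourier transform $\F j_{\lambda,q}$ only for prime powers $q$ and then assembles the general case from the multiplicativity of $j$ and of Kloosterman sums (resp.\ of $J_{m/2}$), whereas you run the character-sum computation directly for general $n$, evaluating $G(r)$ for every $\gcd(r,n)=g$ and invoking the Ramanujan sum $\sum_{\gcd(r,n)=g}\e(-r/n)=\mu(n/g)$. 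Your route gives (d) in one clean stroke without the Hensel-type recursion, at the cost of needing the Gauss sum identity and the vanishing of $G(r)$ for composite moduli (both of which do extend from the prime-power case by CRT, so nothing is lost). The only loose end is the $p=2$ case in (b), which you flag but do not close; it closes in one line because $M$ is self-dual and even, so $l_0 \in pM^{\vee} = pM$ means $l_0 = p w$ with $w \in M$, whence $l_0^2/2 = p^2 w^2/2 \equiv 0 \bmod p$ uniformly in $p$, contradicting $l_0^2/2 \equiv d \bmod p$ with $p \nmid d$ --- this is exactly the argument the paper's Lemma \ref{l-Hensel} uses, and it makes your separate odd/even case split unnecessary.
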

For later application,
we need to calculate $j_{\lambda,n} = j_{\lambda, n}(1)$ for an even self-dual
lattice. Theorem \ref{th-j} is  sufficient for this purpose. 
First, because of part (a), it suffices to calculate
$j_{\lambda, q}$ when $q$ is a prime power.
Next, part (b) lets us reduce to the case when $p \nmid \lambda$ or $\lambda = 0$.
Finally, parts (c) and (d) handle these two cases respectively.
Proofs of parts (c) and (d) require calculation of some quadratic Gauss sums.
These  calculations have been moved back to appendix \ref{appendix-gauss-sums}.
\begin{proof}[proof of \ref{th-j}(a)]
Choose $\bar{n}, \bar{r} \in \ZZ$ such that $\bar{n} n + \bar{r} r = 1$.
From Chinese remainder theorem we have  mutually inverse isomorphisms 
$\pi: M/nr \to M/n \times M/r$ and $\phi: M/n \times M/r \to M/nr$ given by
\begin{equation*}
\pi( l \bmod nr) = (l \bmod n, l \bmod r), \;\;
\phi(l_1 \bmod n , l_2 \bmod r) = ( \bar{n} n l_2 + \bar{r} r l_1) \bmod n r
\end{equation*}
for $l , l_1, l_2 \in M$.
Pick $v = (l_1 \bmod n, l_2 \bmod r) \in M_n(d) \times M_r(d)$.
Then $\phi(v) =  (l \bmod n r)$ where $l =  \bar{n} n l_2 + \bar{r} r l_1$. We have
\begin{equation*}
l^2/2
 \equiv  \bar{n}^2 n^2 (l_2^2/2) +  \bar{r}^2  r^2(l_1^2/2) 
 \equiv  \bar{n}^2 n^2 d + \bar{r}^2 r^2 d 
\equiv d \bmod  n r.
\end{equation*}
So $\phi(v) \in M_{ n r }(d)$. This proves $\phi(M_n (d) \times M_r(d)) \subseteq M_{n r}(d)$.
Clearly $\pi (M_{n r}(d)) \subseteq M_n (d) \times M_r(d)$. So
restrictions of $\pi$ and $\phi$ yield mutually inverse
isomorphisms between $M_{n r}(d)$ and $M_n (d) \times M_r(d)$. 
Now let 
$(l_1,l_2)$
run over $M_n(d) \times M_r(d)$.
Then $\phi(l_1,l_2)$
runs over $M_{n r}(d)$.
It follows that
\begin{equation*}
j_{\lambda, n r}(d) 
= \sum_{l_1 , l_2} \e\bigl(\tfrac{\ip{ \bar{n} n l_2 + \bar{r} r l_1 }{\lambda}}{n r} \bigr)  
= \sum_{l_1 , l_2} \e \bigl(\tfrac{\ip{  l_2}{\bar{n} \lambda}}{ r} \bigr) 
\e\bigl( \tfrac{\ip{l_1}{ \bar{r} \lambda}}{n} \bigr)
= j_{\bar{n} \lambda, r}(d) j_{\bar{r} \lambda, n} (d).
\qedhere
\end{equation*}
\end{proof}
Part (b) of theorem \ref{th-j} can be proved using a Hensel's lemma type argument. We isolate
this argument in the proof of the next lemma.
\begin{lemma} Assume the setup of \ref{topic-notation}.
Further, assume that $M$ is self-dual and $p \nmid d$. Then
the natural projection $\pi: M/p q \to M/q$ maps
$M_{p q } (d)$ onto $M_{q} (d)$.
For each $\bar{l} \in M_q(d)$, there exists $p^{m-1}$ elements 
 $l \in M_{p q}(d)$ such that $\pi(l) = \bar{l}$. 
\label{l-Hensel}
\end{lemma}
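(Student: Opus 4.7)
The plan is to carry out a Hensel-style lifting. Fix any representative $l_0 \in M$ of $\bar l \in M_q(d)$; every element of $M/p q$ projecting to $\bar l$ has the form $l_0 + q \mu \bmod p q$ with $\mu$ uniquely determined in $M/p$, since the kernel of $\pi$ is the image of $q M$ in $M/p q$. Because $q^2 = p^{2r}$ and $2r \geq r+1$, the quadratic remainder $q^2 \mu^2/2$ vanishes modulo $p q$ (using that $M$ is even to ensure $\mu^2/2 \in \ZZ$), so
\begin{equation*}
\tfrac{1}{2}(l_0 + q \mu)^2 \equiv l_0^2/2 + q \ip{l_0}{\mu} \pmod{p q}.
\end{equation*}
Writing $l_0^2/2 = d + q a$ with $a \in \ZZ$, the condition $l_0 + q \mu \in M_{p q}(d)$ reduces to the single linear congruence $\ip{l_0}{\mu} \equiv -a \pmod p$ on $\mu \in M/p$.

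The heart of the proof is to show that the $\FF_p$-linear functional $\mu \mapsto \ip{l_0}{\mu} \bmod p$ on $M/p$ is nonzero, and hence surjective with kernel of size $p^{m-1}$. This rests on two observations. First, self-duality of $M$ yields an $\FF_p$-linear isomorphism $M/p \to \op{Hom}_{\FF_p}(M/p, \FF_p)$ induced by $x \mapsto \ip{x}{\cdot}$, so the functional $\ip{l_0}{\cdot}$ is nonzero precisely when $l_0 \not\equiv 0 \bmod p$. Second, $l_0 \not\equiv 0 \bmod p$: if it were, then $l_0^2/2$ would be divisible by $p^2$, and reducing $l_0^2/2 \equiv d \bmod q$ modulo $p$ would force $p \mid d$, contrary to the hypothesis.

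Combining these, the admissible $\mu \in M/p$ form a single coset of the hyperplane $\ker \ip{l_0}{\cdot}$, giving exactly $p^{m-1}$ valid lifts of $\bar l$ in $M_{p q}(d)$; in particular at least one lift exists, so $\pi(M_{p q}(d)) = M_q(d)$.

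The main obstacle is the linear-algebra step verifying nondegeneracy of the pairing modulo $p$; this is where self-duality is genuinely used, together with the short arithmetic argument ruling out $l_0 \equiv 0 \bmod p$. Everything else is a binomial expansion and a straightforward fiber count.
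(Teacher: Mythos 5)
Your proof is correct and follows essentially the same route as the paper's: expand $(l_0+q\mu)^2/2$ modulo $pq$, reduce membership in $M_{pq}(d)$ to a single linear congruence on $\mu\in M/p$, and use nondegeneracy of the pairing mod $p$ (from self-duality) together with $p\nmid d$ to conclude that the functional $\ip{l_0}{\cdot}$ is nonzero, so the solution set is an affine hyperplane of size $p^{m-1}$. You merely spell out the vanishing of the quadratic term and the nonvanishing of $l_0$ mod $p$ in slightly more detail than the paper does.
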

\begin{proof} Clearly $\pi(M_{p q}(d) ) \subseteq M_q(d)$.
Given $\bar{u} \in M_{q} (d)$, 
fix $u \in M$ such that  $\bar{u} = u \bmod qM$.
The lifts of $\bar{u}$ to $M/p q$ are of the form
$(u + q x) \bmod p q M$
 where $x$ runs over a full set of coset representatives for $M/p$.
Since $\bar{u} \in M_{q}(d)$, we have $\tfrac{1}{2} u^2 \equiv d \bmod q$. 
Note that $(u + q x) \bmod p q M \in M_{p q}(d)$ if and only if
\begin{equation*}
 \tfrac{1}{2} u^2 + q \ip{u}{x} \equiv d \bmod p q, 
\end{equation*}
or equivalently,
\begin{equation}
\ip{u}{x} \equiv q^{-1} (d - \tfrac{1}{2} u^2) \bmod p.
\label{eq-ipux}
\end{equation}
Since $M$ is self-dual, the bilinear form $\ip{x}{y} \bmod p$ on the $\FF_p$ -vector space
$M/p$ is non-degenerate. 
Note that $(u \bmod p M)$ is a nonzero vector in $M/p$ Since
$\tfrac{1}{2} u^2 \equiv d \bmod q$ and $p \nmid d$.
So the set of all possible $x \in M/p$ satisfying
equation \eqref{eq-ipux} forms an affine hyperplane in the $m$ dimensional 
$\FF_p$ -vector space $M/p$. 
\end{proof}
\begin{proof}[proof of theorem \ref{th-j}(b)]
The first claim is obvious from the definition of $j_{\lambda, n}(d)$. 
The second claim follows from  lemma \ref{l-Hensel}, since
\begin{equation*}
j_{p \lambda, p q}(d) 
= \sum_{ l \in M_{p q}(d)} \e( \ip{l}{\lambda}/ q ) 
= p^{m - 1} \sum_{ \bar{l} \in M_{q} (d)} \e( \ip{\bar{l}}{\lambda}/ q ) 
= p^{m-1} j_{\lambda, q}  (d) .
\end{equation*}
\end{proof}
Given a function $f : \ZZ/n \to \CC$, let 
$(\F f)(x) = \sum_{y \in \ZZ/n} f(y) \e(x y/n)$
denote its Fourier transform.
Lemma \ref{l-Fj} calculates the Fourier transform of 
$d \mapsto j_{\lambda, q}(d)$.
Then we recover $j_{\lambda, q}(d)$ by using the Fourier
inversion formula $(\F^2 f)(x) = n f(-x)$.
\begin{lemma}
Assume the setup of \ref{topic-notation} and that $M$ is self-dual.
 Let $\lambda \in M$.
\par
(a) We have  
\begin{equation}
\F j_{\lambda, q}(c) =
\sum_{l \in M /q }  \e ( (2 \ip{l}{\lambda} + c l^2)/2q ).
\label{eq-Fj}
\end{equation}
\par
(b) Assume $p \nmid \lambda$. If $p \mid c$, then $\F j_{\lambda, q} (c) = 0$. 
If $p \nmid c$, then
\begin{equation}
\F j_{\lambda, q} (c) =  \e( - \bar{c} \lambda^2/2 q ) q^{m/2}.
\label{eq-Fj-rel-prime-case}
\end{equation}
\label{l-Fj}
\end{lemma}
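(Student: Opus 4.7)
The plan is to prove (a) by an elementary exchange of summation, and to prove (b) by completing the square when $p \nmid c$ and by a lattice filtration argument when $p \mid c$.

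For part (a), I would start from $\F j_{\lambda,q}(c) = \sum_{d \in \ZZ/q} j_{\lambda,q}(d)\,\e(cd/q)$ and unfold the definition of $j_{\lambda,q}(d)$. Each $l \in M/q$ lies in $M_q(d)$ for exactly one $d \in \ZZ/q$, namely $d \equiv l^2/2 \bmod q$, so exchanging the order of summation collapses the double sum into a single sum $\sum_{l \in M/q} \e(\ip{l}{\lambda}/q)\,\e(cl^2/(2q))$; combining over the common denominator $2q$ gives \eqref{eq-Fj}.

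For part (b) with $p \nmid c$, let $\bar c \in \ZZ$ be an inverse of $c$ modulo $q$. Using $c\bar c \equiv 1 \bmod q$ together with $\lambda^2 \in 2\ZZ$, I would verify the key congruence $c(l + \bar c \lambda)^2 \equiv cl^2 + 2\ip{l}{\lambda} + \bar c \lambda^2 \pmod{2q}$. Working modulo $2q$ rather than $q$ is necessary because of the factor $2$ in the denominator of \eqref{eq-Fj}, and the evenness of $\lambda^2$ is exactly what absorbs the discrepancy coming from $c\bar c \neq 1$ in $\ZZ$. Substituting $l \mapsto l - \bar c\lambda$ in \eqref{eq-Fj} then factors out $\e(-\bar c\lambda^2/(2q))$ and reduces the sum to the quadratic lattice Gauss sum $\sum_{l \in M/q} \e(cl^2/(2q))$; for $M$ even self-dual of rank $m$ this evaluates to $q^{m/2}$ by the calculations deferred to appendix \ref{appendix-gauss-sums}, yielding \eqref{eq-Fj-rel-prime-case}.

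For part (b) with $p \mid c$, I would write $c = p^k c'$ with $k \geq 1$ and $\gcd(c',p) = 1$, and parameterize $l \in M/p^r$ as $l = l_0 + p^{r-1} l_1$, where $l_0$ ranges over a fixed set $S \subset M/p^r$ of lifts of representatives for $M/p^{r-1}$ and $l_1$ ranges over $M/p$. A short expansion gives $\ip{l}{\lambda}/p^r = \ip{l_0}{\lambda}/p^r + \ip{l_1}{\lambda}/p$ and $l^2/2 \equiv l_0^2/2 \pmod{p^{r-k}}$: the cross term $p^{r-1}\ip{l_0}{l_1}$ is divisible by $p^{r-k}$ since $k \geq 1$, and the tail $p^{2r-2}l_1^2/2$ is as well because $l_1^2 \in 2\ZZ$. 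Hence \eqref{eq-Fj} factors as $\bigl(\sum_{l_0 \in S} \cdots\bigr)\bigl(\sum_{l_1 \in M/p} \e(\ip{l_1}{\lambda}/p)\bigr)$. Because $p \nmid \lambda$ and the pairing on $M/p$ is nondegenerate (since $M$ is self-dual), $\ip{\cdot}{\lambda}$ is a surjective linear functional $M/p \to \FF_p$, so the $l_1$-sum vanishes and thus $\F j_{\lambda,q}(c) = 0$. The main obstacle is the lattice Gauss sum $\sum_{l \in M/q} \e(cl^2/(2q)) = q^{m/2}$ used in the first case; this is the essential technical input and is handled separately in the appendix.
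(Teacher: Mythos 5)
Your proposal is correct and follows essentially the same route as the paper: part (a) by exchanging the order of summation, the $p \nmid c$ case by completing the square modulo $2q$ and invoking the Gauss-sum evaluation $\sum_{l \in M/q}\e(cl^2/2q) = q^{m/2}$ from the appendix, and the $p \mid c$ case by the decomposition $l = l_0 + p^{r-1}l_1$ with the inner sum vanishing as a nontrivial character sum on $M/p$. Your explicit justification of why the congruence must be checked modulo $2q$ (using $\lambda^2 \in 2\ZZ$) is a detail the paper leaves implicit, but the argument is the same.
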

\begin{proof} 
(a) Note that $M/q$ is the disjoint union of $M_q(1), M_q(2), \dotsb, M_q(q)$.
Part (a) now follows since
\begin{equation*}
 \F j_{\lambda, q}(c) 
= \sum_{d = 1}^q \sum_{\substack{l \in M /q \\ l^2/2 \equiv d \bmod q}}
\e\bigl( \tfrac{\ip{l}{\lambda}}{q} \bigr) \e \bigl(\tfrac{c d}{q} \bigr)
= \sum_{l \in M /q } 
\e\bigl( \tfrac{\ip{l}{\lambda}}{q} + \tfrac{c l^2}{2q} \bigr).
\end{equation*}
\par
(b)  If $\op{gcd}(c,q) = 1$, we can complete squares in equation \eqref{eq-Fj} to get
\begin{equation*}
 \F j_{\lambda, q}(c) 
=  \sum_{l \in M/q }  \e\bigl( \tfrac{-\bar{c} \lambda^2}{2q } + \tfrac{c(l + \bar{c}\lambda)^2}{2q} \bigr)
= \e \bigl( \tfrac{-\bar{c} \lambda^2}{2q} \bigr) \sum_{l \in M /q }  \e \bigl( \tfrac{c l^2}{2q} \bigr).
\end{equation*}
Equation \eqref{eq-Fj-rel-prime-case} now follows from the lemma \ref{l-theta-q} in the appendix.
\par
Now assume that $c = p c_1$ for some $c_1 \in \ZZ$.
Let $u$ run over a set of coset representatives for $M/p^{r-1} $
and $x$ run over a set of coset representatives of $M/ p $. Then $(u + p^{r-1} x)$ runs over
over a set of coset representatives of $M/p^r$. 
So, substituting $ l = (u + p^{r-1} x)$  in equation 
\eqref{eq-Fj}, we  
obtain
\begin{align*}
  \F j_{ \lambda, p^r}(c) 
 = \sum_{u \in M/p^{r-1} } 
\e \bigl( \tfrac{\ip{u}{\lambda}}{p^r} + \tfrac{c_1 u^2}{2p^{r-1} } \bigr) \sum_{x \in M/p} 
\e \bigl( \tfrac{\ip{x }{\lambda}}{ p} \bigr). 
\end{align*}
If $p \nmid \lambda$, then
$x \mapsto \ip{x }{\lambda}/ p$ is a nontrivial character on $M/p$, since $M$ is self-dual. 
So the inner sum in the last expression vanishes.
\end{proof}
\begin{proof}[proof of theorem \ref{th-j}(c)]
When $n = q$ is  a prime power, the formula for $j_{\lambda, n}(d)$ follows from 
lemma \ref{eq-Fj-rel-prime-case} using
Fourier inversion. The
general case follows from this using multiplicativity property of $j_{\lambda, n}$ 
and Kloosterman sums (see \ref{th-j}(a) and \cite{IK:ANT} respectively).
\end{proof}
\begin{proof}[proof of theorem \ref{th-j}(d)]
Both sides of the formula we want to prove are multiplicative. So it is enough to 
prove the formula when $n = q = p^r$ is a prime power.
From equation \eqref{eq-Fj} and lemma \ref{l-theta-q},
we have
\begin{equation*}
\F j_{0,p}(c) = \sum_{l \in M/p } \e( cl^2/ 2p) 
= \begin{cases} p^{ m/2} & \text{\; if \;} 1 \leq c < p \\
p^m & \text{\; if \;} c = p.
\end{cases}
\end{equation*}
Using Fourier inversion, we find  
\begin{equation*}
p j_{0,p} (1) 
=  \sum_{c = 1}^p \F j_{0,p}(c) \e(-c/p) 
= (p^m - p^{m/2}). 
\end{equation*}
Lemma \ref{l-Hensel} implies
$j_{0,pq}(1) =  \abs{J_{p q}(1)} = p^{m-1}  \abs{J_{q }(1)} = p^{m-1} j_{0,q}(1)$.
Part (d) of \ref{th-j} follows from this.
\end{proof}
Recall the notation $j_{\lambda,n} = j_{\lambda,n}(1)$.
\begin{lemma} 
Assume that $M$ is self-dual.
Let $u = \min \lbrace v_p(n), v_p(\lambda) \rbrace > 0$.
Then 
\begin{equation*}
 j_{\lambda, n} =  \begin{cases}
p^{(m-1)u} j_{p^{-u} \lambda, p^{-u} n}  &\text{\; if \;}  v_p(\lambda) < v_p(n), \\
 p^{(m-1)u} (1 - p^{-m/2}) j_{p^{-u} \lambda, p^{-u} n}  & \text{\; if \;} v_p(\lambda) \geq v_p(n).
\end{cases}
\end{equation*}
\label{l-cancel-common-factor-in-j}
\end{lemma}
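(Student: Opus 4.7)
The plan is to induct on $u$, using Theorem \ref{th-j}(b) for the inductive step and Theorem \ref{th-j}(a),(d) to handle the base case $u=1$. Set $a = v_p(\lambda)$ and $b = v_p(n)$, so $u = \min(a,b)$; Case 1 is $a < b$ and Case 2 is $a \geq b$.

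For the inductive step ($u \geq 2$) and for Case 1 of the base case ($a = 1 < b$), I would apply Theorem \ref{th-j}(b) once to write $j_{\lambda, n} = p^{m-1}\, j_{\lambda/p,\, n/p}$. The application is legitimate because $v_p(n/p) = b - 1 \geq 1$ (using $b > a$ in Case 1 and $b \geq u \geq 2$ in the inductive step). The new parameters $a-1, b-1, u-1$ either terminate the induction (at $u=1$) or preserve which case we are in; in either event, combining with the inductive hypothesis yields the claimed formula at level $u$.

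The genuinely different case is Case 2 at $u = 1$, i.e.\ $b = 1 \leq a$. Here Theorem \ref{th-j}(b) cannot be applied, because the new modulus $n/p$ would be coprime to $p$. Instead, I would split $n = p \cdot n_1$ with $\gcd(p, n_1) = 1$ via Theorem \ref{th-j}(a) to get
\begin{equation*}
j_{\lambda, n} = j_{\overline{n_1}\lambda,\, p} \cdot j_{\overline{p}\lambda,\, n_1}.
\end{equation*}
Since $\lambda \in pM$, also $\overline{n_1}\lambda \in pM$, so the first factor equals $j_{0, p}$, which Theorem \ref{th-j}(d) evaluates as $p^{m/2 - 1}(p^{m/2} - 1) = p^{m-1}(1 - p^{-m/2})$; the second factor equals $j_{\lambda/p,\, n/p}$ because $\overline{p}\lambda \equiv \lambda/p \pmod{n_1}$. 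Multiplying these gives exactly the Case 2 formula at $u = 1$.

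The main subtlety is that Theorem \ref{th-j}(b) tacitly needs the second argument to remain divisible by $p$ (this divisibility is used in the proof through Lemma \ref{l-Hensel}); that is precisely why Case 2 at $u = 1$ must be handled via (a) and (d) rather than (b), and why the extra factor $(1 - p^{-m/2})$ appears only in Case 2. Everything else in the argument is routine bookkeeping.
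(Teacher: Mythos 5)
Your argument is correct and is assembled from the same ingredients the paper uses (its own proof is the one-line remark that the lemma follows from Theorem \ref{th-j}(a),(b) in the first case and \ref{th-j}(a),(d) in the second); in particular your Case 2 base step via (a) and (d), with $j_{0,p}=p^{m/2-1}J_{m/2}(p)=p^{m-1}(1-p^{-m/2})$, is exactly the paper's computation. The one place you outrun the stated results is the reduction $j_{\lambda,n}=p^{m-1}j_{\lambda/p,\,n/p}$: Theorem \ref{th-j}(b) is formulated only for moduli $pq$ with $q=p^r$ a prime power (this is fixed in \ref{topic-notation}), whereas you invoke it with $n/p$ an arbitrary integer divisible by $p$. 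You correctly isolate the real hypothesis ($p$ divides the new modulus, so that Lemma \ref{l-Hensel} applies), and indeed the proof of \ref{l-Hensel} uses only $p\nmid d$, self-duality, and that divisibility, so the generalization is harmless --- but strictly you should either state it, or do what the paper evidently intends: first use (a) to split off the prime-to-$p$ part of $n$, so that every application of (b) is to a genuine prime-power modulus (this is why (a) is listed among the ingredients for the first case as well). With that small repair, or with your induction run only on the $p$-part of $n$, everything checks out.
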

\begin{proof} This is a straight forward computation using \ref{th-j} (a), (b) in the first case and \ref{th-j} (a), (d) in the second case.
\end{proof}
\begin{lemma} Assume $M$ is positive definite and self-dual.
Fix $\epsilon > 0$. Then there exists a constant $C_{\epsilon}$ such 
that
\begin{equation*}
\abs{j_{n, \lambda}} <  C_{\epsilon} \abs{\lambda}^{(m-1)/2 } n^{\epsilon  + (m-1)/2 }
\text{\; for all \;} n \in \NN \text{\; and all \;} \lambda \in M - \lbrace  0 \rbrace.
\end{equation*}
\label{l-bound-on-magnitude-of-j}
\end{lemma}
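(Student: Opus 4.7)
The plan is to bound each prime-power factor of $j_{\lambda, n}$ via Theorem~\ref{th-j}(c) combined with Weil's bound on Kloosterman sums, and then to convert the remaining $\gcd$ factor into a bound in terms of $\abs{\lambda}$. Factor $n = \prod_{p \mid n} p^{r_p}$ and iterate the multiplicativity property \ref{th-j}(a) to obtain $\abs{j_{\lambda, n}} = \prod_{p \mid n} \abs{j_{\lambda_p, p^{r_p}}}$, where each $\lambda_p$ differs from $\lambda$ by multiplication by a unit modulo $p^{r_p}$; in particular $v_p(\lambda_p) = v_p(\lambda)$, so the local data needed below is preserved.

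For each prime power $p^{r_p}$, set $u_p = \min(v_p(\lambda), r_p)$ and apply Lemma~\ref{l-cancel-common-factor-in-j}. Both cases strip the common $p$-power factor and produce a prefactor of absolute value at most $p^{(m-1) u_p}$ (the $(1 - p^{-m/2})$ in case (ii) being absorbed since its modulus is $\leq 1$). The reduced sum is either $j_{p^{-u_p}\lambda_p,\, p^{r_p - u_p}}$ with $p \nmid p^{-u_p}\lambda_p$ (case (i)), or $j_{\,\cdot\,, 1} = 1$ (case (ii)). In case (i), Theorem~\ref{th-j}(c) expresses the reduced sum as $p^{(r_p - u_p)(m/2 - 1)} S(1, *, p^{r_p - u_p})$, and Weil's bound $\abs{S(1, *, p^s)} \leq (s + 1)\, p^{s/2}$ yields
\begin{equation*}
\abs{j_{\lambda_p, p^{r_p}}} \leq (r_p - u_p + 1) \, p^{(m-1)(r_p + u_p)/2};
\end{equation*}
case (ii) satisfies the same inequality trivially since then $r_p - u_p = 0$.

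Multiplying over all $p \mid n$ and setting $K = \prod_{p \mid n} p^{u_p} = \gcd(n, c(\lambda))$, I obtain
\begin{equation*}
\abs{j_{\lambda, n}} \leq \Bigl( \prod_{p\mid n} (r_p - u_p + 1) \Bigr) n^{(m-1)/2}\, K^{(m-1)/2} \leq \tau(n) \, n^{(m-1)/2} \, K^{(m-1)/2},
\end{equation*}
where $\tau(n) = \prod_p (r_p + 1) = O_\epsilon(n^\epsilon)$ by the standard divisor bound. Since $M$ is positive definite, writing $\lambda = c(\lambda)\, \lambda'$ with $\lambda'$ primitive gives $\abs{\lambda} \geq c(\lambda)\, \mu_0$, where $\mu_0$ is the length of a shortest nonzero vector of $M$; hence $K \leq c(\lambda) \leq \abs{\lambda}/\mu_0$, and absorbing $\mu_0^{-(m-1)/2}$ into the constant yields the claimed inequality.

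The only delicate point is ensuring that the prime-by-prime case analysis from Lemma~\ref{l-cancel-common-factor-in-j} really does place every reduced factor under the hypothesis of Theorem~\ref{th-j}(c); once this bookkeeping is done, the argument reduces to the classical Weil bound $\abs{S(a,b,n)} \leq \tau(n) \gcd(a,b,n)^{1/2} n^{1/2}$ and the divisor estimate $\tau(n) = O_\epsilon(n^\epsilon)$, both of which are standard.
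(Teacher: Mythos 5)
Your proof is correct and follows essentially the same route as the paper's: both use the multiplicativity from Theorem~\ref{th-j}(a), strip common prime factors via Lemma~\ref{l-cancel-common-factor-in-j}, apply Theorem~\ref{th-j}(c) with Weil's bound on the coprime part, and convert $\gcd(n,c(\lambda))$ into a power of $\abs{\lambda}$ using positive definiteness. The only difference is organizational — you unroll the computation explicitly prime by prime, whereas the paper packages the same steps as an induction on $\gcd(n,\lambda)$ after reducing to the intermediate inequality $\abs{j_{\lambda,n}} < C_{\epsilon}\,(n\gcd(n,\lambda))^{(m-1)/2} n^{\epsilon}$.
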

\begin{proof}
Since $M$ is positive definite and integral, $\lambda^2  \geq \op{gcd}(n, \lambda)^2$. So
it suffices to prove an inequality of the form
\begin{equation}
\abs{j_{n, \lambda}} <  C_{\epsilon} (n \op{gcd}(n, \lambda))^{(m-1)/2} n^{\epsilon}.
\label{eq-bound-on-j}
\end{equation}
We prove \eqref{eq-bound-on-j} by induction on $\op{gcd}(n,\lambda)$.
If $\op{gcd}(n, \lambda) = 1$, then \eqref{eq-bound-on-j} follows 
from \ref{th-j}(c) and Weil's bound for Kloosterman sum (see \cite{IK:ANT}, p. 280)
\begin{equation*}
\abs{S(a,b,n) } \leq \bigl( \sum\nolimits_{d \mid n} 1 \bigr) \op{gcd}(a,b,n)^{1/2} n^{1/2} 
\end{equation*}
and the estimate $\sum_{d \mid n} 1 = o( n^{\epsilon})$ for any $\epsilon > 0$.
\par 
If $\op{gcd}(n, \lambda) > 1$, then choose a prime $p$ dividing $\op{gcd}(n, \lambda)$. 
Lemma \ref{l-cancel-common-factor-in-j}
implies that 
$\abs{j_{\lambda, n} } \leq p^{(m-1)u} \abs{j_{p^{-u} \lambda, p^{-u} n}(d) }$
where $u  = \min \lbrace v_p(n), v_p(\lambda)\rbrace$.
Now one can induct.
\end{proof}
We end this section by showing that the Dirichlet series
\begin{equation*}
L(j_{ \lambda}, s) = \sum_{n \in \NN} j_{\lambda, n} n^{-s}
\end{equation*}
can be meromorphically continued to $\CC$ when the lattice $M$ is self-dual. 
This follows from Selberg's theorem on analytic continuation
of Kloosterman sum zeta function. The specific result we need is
recorded in the next lemma. 
\begin{lemma}
The series $\sum_{n \colon \op{gcd}(n, k) = 1} S(a,b, n) n^{-s}$ can be analytically continued to a meromorphic function on the
whole $s$-plane. It is analytic for $\op{Re}(s) > 1$, except possibly for a finite set of poles on the real segment $1 < s < 2$.
\label{l-Selberg}
\end{lemma}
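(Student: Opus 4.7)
The plan is to derive the lemma from Selberg's classical theorem on the meromorphic continuation of the Kloosterman sum zeta function $Z(a,b;s) := \sum_{n \geq 1} S(a,b,n) n^{-s}$, combined with a M\"obius sieve to incorporate the coprimality condition $\gcd(n,k) = 1$.

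For the case $k = 1$, I would invoke Selberg's theorem. Weil's bound gives absolute convergence of $Z(a,b;s)$ for $\op{Re}(s) > 3/2$, and a spectral decomposition of a Poincar\'e series on $SL_2(\ZZ) \backslash \mathcal{H}$ via the Kuznetsov formula then yields meromorphic continuation of $Z(a,b;s)$ to all of $\CC$. The only poles in the half-plane $\op{Re}(s) > 1$ arise from the discrete and Eisenstein spectrum and form a finite set lying on the real segment $(1,2)$; see \cite{IK:ANT} for a textbook treatment.

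To impose the coprimality condition, insert the identity $\mathbb{1}_{\gcd(n,k)=1} = \sum_{d \mid \gcd(n,k)} \mu(d)$ and swap summations to obtain
\begin{equation*}
\sum_{\gcd(n,k)=1} S(a,b,n) n^{-s} = \sum_{d \mid k} \mu(d) d^{-s} \sum_{m \geq 1} S(a,b,dm) m^{-s}.
\end{equation*}
For each $d \mid k$ the inner Dirichlet series is handled by splitting $m = m_1 m_2$ with $m_1$ supported on the primes dividing $d$ and $\gcd(m_2, d) = 1$, then applying the twisted multiplicativity of Kloosterman sums to factor $S(a,b, d m_1 m_2)$ as a product $S(\alpha, \beta, d m_1) S(\alpha', \beta', m_2)$ with twisted arguments depending on $m_2 \bmod d m_1$ and $d m_1 \bmod m_2$. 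Partitioning the $m_2$-sum by residue class modulo $d m_1$ reduces it to standard Kloosterman sum zeta functions on arithmetic progressions, each handled by Selberg's theorem on the congruence subgroup $\Gamma_0(d m_1)$; the outer sum over $m_1$ is a Dirichlet-type series convergent for $\op{Re}(s) > 1$ by Weil's bound on the local factors.

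The main obstacle is the bookkeeping needed to track the twisted parameters introduced by multiplicativity and to verify that, after reassembly, all poles in $\op{Re}(s) > 1$ remain on the real segment $(1,2)$. Since Selberg's theorem on each $\Gamma_0(N)$ produces only poles of the permitted type and only finitely many independent pieces contribute in the region of interest, the stated conclusion follows without introducing any new analytic ingredient beyond Selberg's theorem itself.
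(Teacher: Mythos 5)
Your opening moves match the paper: the M\"obius sieve
$\sum_{\gcd(n,k)=1} S(a,b,n)n^{-s} = \sum_{d\mid k}\mu(d) F(d)$ with
$F(d) = \sum_{n\equiv 0 \bmod d} S(a,b,n)n^{-s}$, and the appeal to Selberg's continuation theorem. But the way you propose to handle $F(d)$ for $d>1$ has a genuine gap. After writing $m = m_1 m_2$ and applying twisted multiplicativity, the second factor is $S(a\overline{dm_1}, b\overline{dm_1}, m_2) = S(a\,\overline{(dm_1)^2}, b, m_2)$, where the bar denotes inversion modulo $m_2$. This twisted parameter depends on $m_2$ through a modular inverse with respect to the \emph{varying} modulus $m_2$; it is not determined by the residue class of $m_2$ modulo $dm_1$ (for instance with $dm_1=2$ the parameter $\overline{4}\bmod m_2$ equals $1,4,2,7,\dots$ for $m_2=3,5,7,9,\dots$, all in the same class mod $2$). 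So partitioning the $m_2$-sum into arithmetic progressions modulo $dm_1$ does not reduce $F(d)$ to ``standard Kloosterman sum zeta functions on arithmetic progressions,'' and the reassembly you defer as ``bookkeeping'' is exactly where the argument breaks.

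The detour is also unnecessary. The series $F(d)$ --- the Kloosterman zeta function restricted to moduli divisible by $d$ --- is \emph{itself} one of the objects Selberg's theorem covers: in the notation of equation (3.9) of \cite{AS:FCM}, $F(d)$ equals $Z(s/2,a,b,\chi,\Gamma_0(d))$ for the trivial multiplier system, because the moduli of the Kloosterman sums attached to the cusp $\infty$ of $\Gamma_0(d)$ are precisely the positive multiples of $d$ and the sums themselves are the ordinary $S(a,b,n)$. Selberg's theorem (see also \cite{DI:KS}) then gives the meromorphic continuation of each $F(d)$ directly, with the poles in $\op{Re}(s)>1$ coming from the finitely many exceptional eigenvalues and lying on the real segment $1<s<2$, and the lemma follows from the finite sum $\sum_{d\mid k}\mu(d)F(d)$. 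No multiplicativity of Kloosterman sums is needed anywhere.
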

\begin{proof} 
 In the notation used in
 equation (3.9) of \cite{AS:FCM}, the Dirichlet series
 $F(d) =  \sum_{n \equiv 0 \bmod d} S(a,b, n) n^{-s}$ is 
 equal to $Z(s/2,a,b,  \chi, \Gamma_0(N))$ for the trivial multiplier 
 system $\chi = 1$. By results of \cite{AS:FCM}
 the series $F(d)$ can be analytically continued to a meromorphic function on $\CC$
 (also see \cite{DI:KS} for details).
The lemma follows since
$\sum_{n : \op{gcd}(n,k) = 1 } S(a,b, n) n^{-s} = \sum_{d \mid k} \mu(d) F(d)$.
\end{proof}
\begin{lemma} 
Assume the setup of \ref{topic-notation}. Further, assume that $M$ is self-dual.
Let $T$ be a finite set of primes. Let $\NN_T$ be the set of all positive integers $n$
such that $ p \nmid n$ for all $p \in T$.
Let $\lambda \in M - \lbrace  0 \rbrace$. Then the Dirichlet series
\begin{equation*}
L_T( j_{\lambda}, s) = \sum_{n \in \NN_T } j_{\lambda, n} n^{-s}
\end{equation*}
can be analytically
continued to a meromorphic function on the whole $s$-plane.
For $\lambda  = 0$, we have 
$L(j_{ 0} , s) = \zeta( s + 1 - m)/ \zeta( s + 1 - (m/2))$.
\par
In particular, $L(j_{\lambda}, s)$ has analytic continuation as a meromorphic function on 
 the whole $s$-plane for all $\lambda \in M$.
\label{l-analytic-continuation-of-Dirichlet-series-of-j}
\end{lemma}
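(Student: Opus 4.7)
The plan is to handle $\lambda = 0$ by a direct Euler-product computation and the case $\lambda \neq 0$ by reducing to Selberg's theorem for Kloosterman sum zeta functions (Lemma \ref{l-Selberg}) via the structural results in Theorem \ref{th-j}.

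First, for $\lambda = 0$ I would invoke Theorem \ref{th-j}(d), which yields the multiplicative formula $j_{0,n} = n^{m-1}\prod_{p\mid n}(1 - p^{-m/2})$, so the Dirichlet series factors as an Euler product
\[
L(j_0, s) = \prod_p \Bigl(1 + (1-p^{-m/2})\sum_{k\geq 1} p^{-k(s-m+1)}\Bigr) = \prod_p \frac{1 - p^{-(s-m/2+1)}}{1 - p^{-(s-m+1)}} = \frac{\zeta(s+1-m)}{\zeta(s+1-m/2)},
\]
which is meromorphic on $\CC$.

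For $\lambda \neq 0$, let $T_0 = \{p : p \mid c(\lambda)\}$, a finite set. The base case is $T \supseteq T_0$: then every $n \in \NN_T$ is coprime to $c(\lambda)$, so Theorem \ref{th-j}(c) gives $j_{\lambda,n} = n^{m/2-1}\,S(1,\lambda^2/2,n)$, and
\[
L_T(j_\lambda, s) = \sum_{n \in \NN_T} S(1,\lambda^2/2,n)\, n^{-(s-m/2+1)}
\]
falls immediately under Lemma \ref{l-Selberg} (with $k = \prod_{p\in T}p$). I would then reach general $T$ by inducting on $|T_0\setminus T|$: for $p \in T_0\setminus T$, split
\[
L_T(j_\lambda,s) = L_{T\cup\{p\}}(j_\lambda,s) + \sum_{k\geq 1} p^{-ks} F_k(s), \qquad F_k(s) = \sum_{n' \in \NN_{T\cup\{p\}}} j_{\lambda, p^k n'} (n')^{-s}.
\]
The first summand is meromorphic by the inductive hypothesis. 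In $F_k$ I would apply Theorem \ref{th-j}(a) to factor $j_{\lambda, p^k n'} = j_{\bar{p^k}\lambda, n'} \cdot j_{\bar{n'}\lambda, p^k}$ and group the $n'$-sum by the residue class of $n'$ modulo $p^k$, so that the second factor becomes a constant in each class.

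The principal difficulty is that the remaining Kloosterman sum in $j_{\bar{p^k}\lambda, n'} = (n')^{m/2-1} S(1, \bar{p^k}^2 \lambda^2/2, n')$ has second argument depending on $n'$ through a modular inverse. I would resolve this by invoking the Kloosterman-sum multiplication formula at the coprime moduli $p^k$ and $n'$:
\[
S(1, \lambda^2/2, p^k n') = S(1, (n')^{-2}\lambda^2/2, p^k) \cdot S(1, \bar{p^k}^2 \lambda^2/2, n'),
\]
with $(n')^{-2}$ taken modulo $p^k$ and $\bar{p^k}^2$ modulo $n'$. Within each residue class $n' \equiv a \bmod p^k$ the first factor is a nonzero constant, while the left-hand side, rewritten as a zeta function in $N = p^k n'$ subject to $v_p(N) = k$ and coprimality with $\prod_{q \in T}q$, reduces after Möbius inversion to differences of the $F(d)$-type series $\sum_{d \mid N} S(1, \lambda^2/2, N)\, N^{-s'}$ recalled in the proof of Lemma \ref{l-Selberg}. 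Each of these is meromorphic on $\CC$ by Selberg's theorem, so after unwinding the finitely many residue-class and coprimality conditions the induction closes.
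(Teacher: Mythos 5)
Your base case ($T \supseteq T_0$, via Theorem \ref{th-j}(c) and Lemma \ref{l-Selberg}), the $\lambda=0$ computation, and the idea of inducting on $\abs{T_0 \setminus T}$ all match the paper. But your inductive step has two genuine gaps. First, you write $L_T(j_\lambda,s) = L_{T\cup\{p\}}(j_\lambda,s) + \sum_{k\ge 1} p^{-ks}F_k(s)$ and propose to continue each $F_k$ separately; even if each $F_k$ is meromorphic on $\CC$, an infinite sum of meromorphically continued functions need not be meromorphic, and you give no bounds on the continued $F_k$ that would let you resum over $k$. (This is precisely the obstruction the paper itself cannot overcome in Remark \ref{remark-analytic-continuation-of-Fourier-coefficients}.) The paper's proof avoids any infinite sum of continued pieces: with $r = v_p(\lambda)$ and $\mu = p^{-r}\lambda$, Lemma \ref{l-cancel-common-factor-in-j} shows that for $v_p(n) = t \le r$ the $p$-part of $j_{\lambda,n}$ is the constant $j_{0,p^{t}}$, so those finitely many slices reduce to $p^{-ts}j_{0,p^t}L_{T\cup\{p\}}(j_{p^{-t}\lambda},s)$, while \emph{all} terms with $v_p(n) > r$ are absorbed in one stroke into $p^{(m-1-s)r}\bigl(L_T(j_\mu,s) - L_{T\cup\{p\}}(j_\mu,s)\bigr)$, which the induction hypothesis covers because $p \notin P(\mu)$. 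Only finitely many meromorphic functions are ever added.

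Second, your device for $F_k$ --- grouping $n'$ into residue classes mod $p^k$ and invoking the Kloosterman multiplication formula --- does not reduce to Lemma \ref{l-Selberg}. That lemma, and the M\"obius inversion in its proof, handle only divisibility and coprimality constraints on the modulus; restricting to a fixed invertible class $n' \equiv a \bmod p^k$ is a character condition, not a divisor condition, and would require Selberg's continuation for nontrivial multiplier systems, which you have not supplied. Moreover, the claim that $S(1,(n')^{-2}\lambda^2/2,p^k)$ is a \emph{nonzero} constant on each class is false in general --- Kloosterman sums to prime-power moduli frequently vanish --- so you cannot divide by this factor to isolate the class-restricted series. (A smaller issue: applying Theorem \ref{th-j}(c) to $j_{\overline{p^k}\lambda,n'}$ requires $n'$ coprime to $\lambda$, which fails when $T_0\setminus T$ contains primes other than $p$.) The repair is to replace your treatment of the $p$-divisible terms by the finite decomposition coming from Lemma \ref{l-cancel-common-factor-in-j}.
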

\begin{proof}
Let $P(\lambda)$ be the set of prime divisors of $\lambda$. 
We shall prove the lemma by induction on $\abs{P(\lambda) - T }$.
First, assume that $T \supseteq P(\lambda)$. 
If $n \in \NN_T$, then $n$ and $\lambda$ are relatively prime.
So by theorem \ref{th-j}(c), we have
\begin{equation*}
L_T( j_{\lambda}, s) =  \sum_{n \in \NN_T} S(1, \lambda^2/2, n) n^{(m/2) - 1 - s}.
\end{equation*}
Note that $\NN_T = \lbrace n \in \NN \colon \op{gcd}(n, k_T) =1 \rbrace$
where $k_T$ is the product of all the primes in $T$.
So \ref{l-Selberg} proves the lemma in the case $\abs{ P(\lambda) - T } = 0$
\par
Now fix a $\lambda \in M$ and  a finite set of primes $T$ such that
$P(\lambda) - T \neq \emptyset$. We want to prove the lemma for $(\lambda , T)$.
Assume by induction that the lemma holds for all $(\lambda', T')$
such that $\abs{ P(\lambda') - T' } <  \abs{ P(\lambda) - T} $. 
Choose a prime $p \in P(\lambda) - T$. Let $T' = T \cup \lbrace p \rbrace$.
Write $\lambda = p^r \mu$  with $p \nmid \mu$. 
Write
$L_T(j_{ \lambda}, s)  = A_{\infty} + \sum_{t = 0}^{r} A_t$
where
\begin{equation*}
A_t =  \sum_{n \in \NN_T : v_p(n) = t} j_{\lambda, n} n^{-s} 
\text{\; and \;} 
A_{\infty} =  \sum_{n \in \NN_T : v_p(n) > r} j_{\lambda, n} n^{-s}.
\end{equation*}
We shall argue that each $A_j$ can be analytically continued.
Fist consider the sum $A_t$ with $t \leq r$. 
So let $n \in \NN_T$ such that $v_p(n) = t \leq r$. Let $ n_1 = p^{-t} n$.
Using lemma \ref{l-cancel-common-factor-in-j}, we obtain
\begin{equation*}
A_t 
= \sum_{n_1 \in \NN_{ T' } }  j_{0,p^t} \cdot  j_{p^{-t} \lambda, n_1} (p^t n_1)^{-s}
= p^{-t s}  j_{0,p^t}   \cdot L_{T'} (j_{ p^{-t} \lambda} , s).
\end{equation*}
Note that $\abs{ P(p^{-t} \lambda) - T' }= \abs{ (P(\lambda) - T) - \lbrace p \rbrace } 
= \abs{P(\lambda) - T} - 1$. 
So $A_t$ has an analytic continuation by induction hypothesis.
\par
Now consider the sum $A_{\infty}$. Let $n \in \NN_T$ such that $v_p(n)  > r$. 
Lemma \ref{l-cancel-common-factor-in-j} implies
\begin{equation*}
j_{\lambda, n} 
= p^{(m-1) r}  j_{\mu, p^{-r} n}.
\end{equation*}
Write $n_2 = p^{-r} n$. Note that as $n$ varies over
$\NN_T \cap \lbrace k \in \NN \colon v_{p}(k) > r \rbrace$,
the number $n_2$ varies over $\NN_T - \NN_{T'}$.
It follows that
\begin{equation*}
A_{\infty} 
= p^{(m-1) r} \sum_{n_2 \in \NN_{T} - \NN_{T'} } j_{\mu, n_2} (p^{r} n_2)^{-s}
= p^{(m - 1 - s) r} (  L_{T} ( j_{ \mu}, s) - L_{T' } ( j_{ \mu}, s) ).
\end{equation*}
Note that $\abs{P(\mu) - T} = \abs{P(\mu) - T' } = \abs{P(\lambda) - T} - 1$. So
by induction hypothesis, it follows that
$A_{\infty}$ has analytic continuation. This proves the lemma for $\lambda \neq 0$.
\par
Finally, for $\lambda = 0$, theorem \ref{th-j}(d) implies
$L(j_0,s) = L( J_{m/2}, s + 1 - (m/2))$.
Since $\sum_{d \mid n} J_{k} (n) = n^{k}$, we have
$L(J_{k}, w) = \zeta( w - k)/\zeta(w)$. 
\end{proof}
%
%
%
%
\section{The even self-dual lattice of signature (25,1)}
\label{section-II251}
In this section, we collect a few facts about the Leech lattice and $\mathrm{II}_{25,1}$ that
we would need for our calculations in section \ref{section-Fourier-expansion}. 
For more details see \cite{C:Aut26} or \cite{REB:Thesis}.
 Our sign conventions about Lorentzian 
lattices are consistent with \cite{REB:Thesis}.
\begin{topic}{\bf Leech cusp and Weyl chamber ``containing" a Leech cusp:} 
\label{t-C}
As in the introduction, Let $L  = \Lambda \oplus \mathrm{II}_{1,1} \simeq \mathrm{II}_{25,1}$
where $\Lambda$ is the Leech lattice and $\mathrm{II}_{1,1}$ is a hyperbolic cell.
Write $V = L \otimes \RR$. 
If $l \in L$, we shall write $l = (\lambda; m,n)$ with $\lambda \in \Lambda$ and $m,n \in \ZZ$.
Then $l^2 = \lambda^2 - 2 m n$. 
Let $\rho = (0; 0, 1) \in L$. Then $\rho$ is a primitive norm zero vector in $L$ such that 
$\rho^{\bot}/\rho \simeq \Lambda$, that is, $\rho$ is a Leech cusp. 
If $v \in V$, we define its {\it height} by
\begin{equation*}
\op{ht}(v) = - \ip{v}{\rho}.
\end{equation*}
So the height of $( *; h, *)$ is equal to $h$.
The automorphism group of $L$ acts on the hyperbolic space $\cH^{25}$.
As a concrete model of $\cH^{25}$, we take
\begin{equation*}
\cH^{25} = \lbrace v \in V \colon v^2 = -1, \op{ht}(v)  > 0 \rbrace.
\end{equation*}
So if $v \in \cH^{25}$, then it has the form $ v = (x; m, (x^2 + 1)/2m) $ for some $x \in \Lambda \otimes \RR$ and some $m > 0 $.
If $c \in \RR$, define
 \begin{equation*}
 B_{\rho}(c) =  \lbrace v \in \cH^{25} \colon  \op{ht}(v) < c \rbrace.
 \end{equation*}
The sets of the form $B_{\rho}(c)$ are called {\it (open) horoballs around $\rho$}.
Given $x, y \in \cH^{25}$, we say that $x$ is closer to $\rho$ than $y$ if
$\op{ht}(x) < \op{ht}(y)$. This is equivalent to saying that $d(x, B) < d(y,B)$ where
 $B$ is a small horoball around $\rho$ that does not meet $\lbrace x, y \rbrace$.
\par
A {\it root} of a lattice is a norm $2$ lattice vector. Let $r$ be a root of $L$.
We say that $r$ is a {\it positive root} if $\op{ht}(r) > 0$. 
The hyperplane $r^{\bot}$ determines a 
hyperplane in $\cH^{25}$, called the {\it mirror} of the root $r$.
Since the Leech lattice has no roots, 
$\rho$ does not lie on the mirror of any root of $L$.
The positive roots of $L$ corresponding to the mirrors closest to $\rho$ are the roots of height $1$. These
are called the {\it Leech roots} (or {\it simple roots}) and are parametrized by the vectors of the Leech lattice.
For each $\lambda \in \Lambda$, we have a Leech root 
\begin{equation*}
s_{\lambda} =  ( \lambda; 1, \tfrac{\lambda^2}{2} - 1).
\end{equation*}
There is a unique Weyl chamber $C$ of the reflection group $R(L)$ in $\mathcal{H}^{25}$ 
containing $\rho$ in its closure.
Conway proved that
\begin{equation*}
C = \lbrace x \in \cH^{25} \colon \ip{x}{s_{\lambda}} < 0 \text{\; for all \;} \lambda \in \Lambda \rbrace.
\end{equation*}
All these inequalities are necessary to define $C$, that is, 
the walls of $C$ are in bijection with the mirrors
orthogonal to the Leech roots. 
\end{topic}
\begin{lemma}
The horoball $B_{\rho}(1/\sqrt{2})$ is contained in the Weyl chamber $C$.
\label{l-small-horoballs-fit-in-Weyl-chamber}
\end{lemma}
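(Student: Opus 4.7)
The plan is to unpack what it means for $v \in \mathcal{H}^{25}$ to lie in $C$, express the defining inequalities of $C$ in terms of the coordinates of $v$ on $\Lambda \oplus \mathrm{II}_{1,1}$, and then observe that they all become automatic once $\operatorname{ht}(v)$ is sufficiently small.

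Concretely, let $v \in \mathcal{H}^{25}$ and write $v = (x; m, n)$ with $v^2 = x^2 - 2mn = -1$, so $n = (x^2+1)/(2m)$ and $m = \operatorname{ht}(v) > 0$. I would first compute
\begin{equation*}
\ip{v}{s_\lambda} = \ip{x}{\lambda} - m\bigl(\tfrac{\lambda^2}{2} - 1\bigr) - n
\end{equation*}
and multiply through by $2m > 0$. Using $2mn = x^2 + 1$, the expression $2m \ip{v}{s_\lambda}$ becomes
\begin{equation*}
2m \ip{x}{\lambda} - m^2 \lambda^2 - x^2 + 2m^2 - 1 = -(x - m\lambda)^2 + (2m^2 - 1).
\end{equation*}
Hence the condition $\ip{v}{s_\lambda} < 0$ is equivalent to
\begin{equation*}
(x - m \lambda)^2 > 2 m^2 - 1.
\end{equation*}

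Now I would invoke the hypothesis $v \in B_\rho(1/\sqrt{2})$, i.e.\ $m < 1/\sqrt{2}$, which forces $2m^2 - 1 < 0$. Since $(x - m\lambda)^2 \ge 0$ (the Leech lattice is positive definite, so the same is true of $\Lambda \otimes \RR$), the inequality $(x - m\lambda)^2 > 2m^2 - 1$ holds strictly for every $\lambda \in \Lambda$. By Conway's description of $C$ recalled in \ref{t-C}, this means $v \in C$, and we are done.

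There is no real obstacle in this argument; the only substantive step is the algebraic identification $2m \ip{v}{s_\lambda} = -(x - m\lambda)^2 + (2m^2 - 1)$, after which the conclusion is immediate. The statement is essentially just the observation that $1/\sqrt{2}$ is precisely the threshold at which the bound $2m^2 - 1$ becomes non-positive.
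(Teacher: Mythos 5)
Your proof is correct and is essentially the same as the paper's: both complete the square in $\ip{v}{s_\lambda}$ and observe that the resulting constant term $2m^2-1$ (equivalently $h - \tfrac{1}{2h}$ after the paper's normalization) is negative precisely when $\operatorname{ht}(v) < 1/\sqrt{2}$, while the squared term contributes with the favorable sign. No issues.
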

\begin{proof}
Take $v = (\alpha; h, \tfrac{\alpha^2 +1}{2 h} ) \in B_{\rho}(1/\sqrt{2})$. Then $h^2 < \tfrac{1}{2}$.
Writing out $\ip{s_{\lambda}}{v}$ and completing square one obtains
\begin{equation*}
\ip{s_{\lambda}}{v} = -\tfrac{h}{2} ( \lambda - \tfrac{\alpha}{h} )^2 + h - \tfrac{1}{2 h}.
\end{equation*}
Now $h^2< \tfrac{1}{2}$ implies $h - \tfrac{1}{2h} < 0$. So
$\ip{s_{\lambda}}{v} < 0$ for all $\lambda \in \Lambda$.
\end{proof}
\begin{topic}{\bf The translations:}
Let $\TT_{\RR}$ be the automorphisms of $V$ that fix $\rho$ and act trivially on
$\rho^{\bot}/\rho$.
One verifies that $\TT_{\RR} = \lbrace T_v : v \in \Lambda \otimes \RR \rbrace$ where 
\begin{equation*}
T_v(l; a,b) = (l + av; a, b + \ip{l}{v} + av^2/2).
\end{equation*}
The group $\TT_{\RR}$ is naturally isomorphic to the additive group
$\Lambda \otimes \RR$ via $ T_v \mapsto v$.
Let 
\begin{equation*}
\TT = \TT_{\RR} \cap \Aut(L) = \lbrace T_{\mu} : \mu \in \Lambda \rbrace,
\end{equation*}
 that is,
$\TT$ consists of the automorphisms of $L$ that fix $\rho$ and act trivially on $\rho^{\bot}/\rho$. 
\end{topic}
\begin{topic}{\bf The orbits of the continuous group of translations acting on $V$: }
\label{topic-continuous-group-of-translations}
The orbits of action of $\TT_{\RR}$ on $V$ are as follows:
The subspace $\rho^{\bot}$ splits into the zero dimensional orbit $\lbrace \rho \rbrace$
and one dimensional orbits $(l,0,0) + \RR \rho$, for
$l \in \Lambda - \lbrace 0 \rbrace$.
On the complement of $\rho^{\bot}$, the action of $\TT_{\RR}$ is free, so we have
$24$ dimensional orbits. 
For each $h, k \in \RR$, we have a free orbit $S_{k,h}$ defined by
\begin{equation*}
S_{k,h}  = \TT_{\RR}( 0;h,k/2h)= \lbrace v \in V: v^2 = -k , \op{ht}(v) = h \rbrace.
\end{equation*}
The map 
\begin{equation}
\phi: \Lambda \otimes \RR \to S_{k,h} \text{\; defined by \;} \phi(v) = T_v(0;h,k/2h)
\label{eq-phi}
\end{equation}
is an isomorphism with inverse given by $ (l, h, *) \mapsto l/h$. 
Under this identification, the action of $\TT$ on $S_{ k , h}$ corresponds to 
the action of $\Lambda$ on $\Lambda \otimes \RR$ by translation.
So $S_{k,h }/\TT$ can be identified with the torus $(\Lambda \otimes \RR)/  \Lambda$.
\end{topic}
\begin{topic}{\bf The orbits of the discrete group of translations acting on the roots: }
\label{topic-discrete-group-of-translations}
Since $\Lambda$ does not have any roots, all roots of $L$ have nonzero height.
Fix $n \geq 1$.
A root $r \in L(2)$ of height $n$ has the form $r = (l; n, \tfrac{l^2/2 - 1}{n} )$.
So, given $l \in \Lambda$, there is a root of the form $(l; n , *)$
if and only if $l^2/2 \equiv 1 \bmod n$. 
As in \ref{topic-notation}, we write
\begin{equation*}
\Lambda_n(1) = \lbrace \bar{l} \in \Lambda/n : \bar{l}^2/2 \equiv 1 \bmod n \rbrace.
\end{equation*}
For each coset $\bar{l} \in \Lambda_n(1)$,
lift $\bar{l}$ to a lattice vector $l$ and fix once and for all a root
$r_{n,\bar{l}} =(l;n,*)$ (recall: the last coordinate of $r_{n , \bar{l}}$ is determined by
the condition $r_{n,\bar{l}}^2 = 2$).
Now one verifies easily that the set of roots of height $n$
is the disjoint union of the free $\TT$-orbits $\TT r_{n, \bar{l}}$,
parametrized by $\bar{l} \in \Lambda_n(1)$. 
\end{topic}
The lemma below is immediate corollary of \ref{l-small-horoballs-fit-in-Weyl-chamber}.
\begin{corollary}
Let $h, k \in \RR_{>0}$ such that $ 2 h^2/k < 1$. If $z \in S_{k,h}$, then  
$z/\sqrt{k}$  is contained in the Weyl chamber $C$.
\label{l-choice-of-h-k}
\end{corollary}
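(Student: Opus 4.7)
The plan is to verify that $z/\sqrt{k}$ lands in the horoball $B_{\rho}(1/\sqrt{2})$ and then invoke lemma \ref{l-small-horoballs-fit-in-Weyl-chamber}. Since $z \in S_{k,h}$, by definition $z^2 = -k$ and $\op{ht}(z) = h > 0$. Both conditions scale the expected way: $(z/\sqrt{k})^2 = z^2/k = -1$ and $\op{ht}(z/\sqrt{k}) = h/\sqrt{k} > 0$. Thus $z/\sqrt{k} \in \cH^{25}$ as defined in \ref{t-C}.

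Next I would translate the hypothesis $2h^2/k < 1$ into the horoball condition. Taking square roots gives $h/\sqrt{k} < 1/\sqrt{2}$, which is exactly $\op{ht}(z/\sqrt{k}) < 1/\sqrt{2}$. Hence $z/\sqrt{k} \in B_{\rho}(1/\sqrt{2})$.

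Finally, lemma \ref{l-small-horoballs-fit-in-Weyl-chamber} says $B_{\rho}(1/\sqrt{2}) \subseteq C$, so $z/\sqrt{k} \in C$ as claimed. There is no real obstacle here — the statement is genuinely just a reformulation of the previous lemma after rescaling the norm of $z$ to $-1$; the only things to check are the sign conventions (which are fine because $h, k > 0$) and that the inequality $2h^2/k < 1$ is precisely the scaled form of $h^2 < 1/2$ used in the horoball lemma.
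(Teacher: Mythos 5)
Your proof is correct and matches the paper's intent exactly: the paper states this corollary as an immediate consequence of Lemma \ref{l-small-horoballs-fit-in-Weyl-chamber} without writing out the rescaling, and your computation (norm scales to $-1$, height scales to $h/\sqrt{k} < 1/\sqrt{2}$, hence the point lies in $B_{\rho}(1/\sqrt{2}) \subseteq C$) is precisely the omitted verification.
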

%
%
%
%
\section{Fourier series expansion of the Poincare series}
\label{section-Fourier-expansion}
\begin{topic}
\label{topic-setup-for-Fourier-series-computation}
We continue with the setup of section \ref{section-II251}. 
Fix a branch of logarithm on positive
half plane so that $(-1)^s = \e(s/2)$.
As stated in the introduction,
the infinite series $E(z,s) = \sum_{ r \in L(2)}  \ip{r}{z}^{-s}$ 
converges for $\op{Re}(s) > 25$ and  defines an analytic function in $s$
invariant under $\Aut(L)$. 
Fix, $k, h > 0$ such that 
\begin{equation*}
\nu = 2h^2/k < 1.
\end{equation*}
Then \ref{l-choice-of-h-k} implies that $S_{k,h}$ does not meet any hyperplane
orthogonal to the roots of $L$.
So the restriction of $E(z,s)$ to
$S_{k,h} \simeq \Lambda \otimes \RR \simeq \RR^{24}$ (see \ref{topic-continuous-group-of-translations})
is well defined and invariant under the group of translation $\TT \simeq \Lambda \simeq \ZZ^{24}$.
Thus $E(z,s)$ defines a function on the torus $S_{k ,h}/\TT$.
We shall now calculate the Fourier series of this function.
We call this the Fourier series of $E(z,s)$ at the Leech cusp $\rho$.
The Fourier series has the form:
\begin{equation}
E( z ,s) = \sum_{\lambda \in \Lambda} a_{\lambda} (k,h,s)
\e ( \ip{ \lambda}{ \phi^{-1}(z)} )     
\label{eq-Fourier-series-of-E}
\end{equation}
where $\phi : \Lambda \otimes \RR \xrightarrow[]{\sim} S_{k,h}$ is as in equation \eqref{eq-phi}.
The Fourier coefficients are given by
\begin{equation*}
a_{\lambda}(k, h ,s) 
= \int_{v \in (\Lambda \otimes \RR)/\Lambda} E(\phi(v),s)   
\e ( -\ip{\lambda}{v}  ) d v.
\end{equation*}
\end{topic}
Theorem \ref{th-fourier-coeffients} gives a formula for the Fourier coefficients. The formula
involves the character sums $j_{n, \lambda}  = j_{n, \lambda}^{\Lambda}(1)$
from section \ref{section-j} and the numbers
\begin{equation*}
c_n = \sqrt{\tfrac{k}{h^2} - \tfrac{2}{n^2} }.
\end{equation*}
Note that our choice $2 h^2/k < 1$ ensures that $c_n^2  > 0$ for all $n \geq 1$.
\begin{theorem}
Write $a^*_{\lambda}(k, h, s) = (h/2 \pi)^s \Gamma(s)  a_{\lambda}(k,h,s)$.
Then one has
\begin{equation*}
 a_{\lambda}^*(k,h,s) 
=
\begin{cases} 2 (1 + \e(-\tfrac{s}{2} ))\abs{\lambda}^{s - 12}
\sum_{n \geq 1}  j_{\lambda, n} n^{-s} c_{n}^{12-s} K_{12 - s}(2 \pi \abs{\lambda} c_{n} ) 
& \text{ if \;} \lambda \neq 0, \\
& \\
(1 + \e(-\tfrac{s}{2} )) \Gamma(s - 12) \pi^{12 - s} \sum_{n \geq 1}  j_{0, n} n^{-s}
c_n^{24 - 2 s}
& \text{ if \;} \lambda = 0.
\end{cases}
\end{equation*}
\label{th-fourier-coeffients}
\end{theorem}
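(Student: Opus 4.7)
The plan is to expand the defining series inside the Fourier integral for $a_\lambda$, and then unfold the sum over $\TT$-translates against the torus $(\Lambda\otimes\RR)/\Lambda$. First I would split the sum over $r\in L(2)$ by sign of height: the involution $r\mapsto -r$ interchanges positive and negative height roots, and with the branch $(-1)^{-s}=\e(-s/2)$ the two halves combine into $(1+\e(-s/2))$ times the sum over positive-height roots of $|\ip{r}{\phi(v)}|^{-s}$. By \ref{topic-discrete-group-of-translations} the positive-height roots of height $n$ are exactly $r=(l;n,(l^2-2)/(2n))$ with $l\in\Lambda$ satisfying $l^2/2\equiv 1\bmod n$. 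Using $\phi(v)=(hv;h,k/(2h)+hv^2/2)$ and completing the square in $v$, a direct calculation gives
\[
\ip{r}{\phi(v)} \;=\; -\tfrac{nh}{2}\bigl((v-l/n)^2+c_n^2\bigr),
\]
which is strictly negative since $c_n^2>0$, so $|\ip{r}{\phi(v)}|^{-s}=(nh/2)^{-s}((v-l/n)^2+c_n^2)^{-s}$ with no branch ambiguity.

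Next I would exploit the $\TT$-orbit structure. For fixed $n$, the set of admissible $l\in\Lambda$ is the disjoint union, as $\bar l$ runs through $\Lambda_n(1)$ and $l_0$ is a chosen lift of $\bar l$, of the translates $l_0+n\Lambda$. Self-duality of $\Lambda$ ensures that $v\mapsto\e(-\ip{\lambda}{v})$ is $\Lambda$-periodic, so the standard unfolding identity replaces the torus integral plus sum over $\mu\in\Lambda$ (the index of $l=l_0+n\mu$) by a single integral over $\Lambda\otimes\RR$. After substituting $u=v-l_0/n$ the $l_0$-dependence factors out as $\e(-\ip{\lambda}{l_0}/n)$; summing over $\bar l\in\Lambda_n(1)$ yields $j_{-\lambda,n}$, which equals $j_{\lambda,n}$ because $\Lambda_n(1)$ is invariant under $\bar l\mapsto-\bar l$. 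What remains for each $n$ is the Euclidean integral
\[
I(\lambda,c_n,s) \;=\; \int_{\Lambda\otimes\RR}(u^2+c_n^2)^{-s}\e(-\ip{\lambda}{u})\,du.
\]

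For $\lambda\neq 0$ I would evaluate $I(\lambda,c_n,s)$ by the standard Gaussian trick: writing $(c_n^2+u^2)^{-s}=\Gamma(s)^{-1}\int_0^\infty t^{s-1}\exp(-t(c_n^2+u^2))\,dt$, executing the Gaussian Fourier transform in $u\in\RR^{24}$, and recognizing the remaining $t$-integral, after the substitution $t\mapsto(\pi\abs{\lambda}/c_n)t$, as the integral representation $K_\nu(z)=\tfrac12\int_0^\infty t^{\nu-1}\exp(-z(t+1/t)/2)\,dt$ of the modified Bessel function. This yields
\[
I(\lambda,c_n,s) \;=\; \frac{2\pi^s}{\Gamma(s)}\,\abs{\lambda}^{s-12}c_n^{12-s}K_{12-s}(2\pi c_n\abs{\lambda}),
\]
using $K_\nu=K_{-\nu}$. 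For $\lambda=0$ the same decomposition (now with no Fourier character) reduces via polar coordinates and the beta integral to $I(0,c_n,s)=\pi^{12}\Gamma(s-12)\Gamma(s)^{-1}c_n^{24-2s}$. Substituting back into the unfolded expression, multiplying by $(h/2\pi)^s\Gamma(s)$, and collecting the cancellations $(h/2\pi)^s(h/2)^{-s}=\pi^{-s}$ (against $\pi^s$ from $I$ in the first case, and against $\pi^{12}$ in the second) gives the two displayed formulas for $a_\lambda^*$.

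The main obstacle will be justifying the unfolding, i.e., swapping the sum over roots with the torus integral and with the Fourier transform. For $\op{Re}(s)>25$ the defining series for $E(\phi(v),s)$ converges absolutely (appendix \ref{l-convergence}), and since $S_{k,h}$ avoids every mirror by corollary \ref{l-choice-of-h-k} the integrand is continuous on the compact torus, so Fubini applies and the $\TT$-unfolding is legitimate. One must also check that the resulting $n$-series of Bessel functions converges absolutely, which follows from $c_n\to\sqrt{k}/h>0$, the exponential decay of $K_\nu$ for large argument, and a polynomial bound on $\abs{j_{\lambda,n}}$ available from \ref{th-j}. A minor bookkeeping point is branch tracking: once $(-1)^s=\e(s/2)$ is fixed at the outset, every other quantity raised to a power is positive real, so no further branch choices enter the computation.
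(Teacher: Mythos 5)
Your proposal is correct and follows essentially the same route as the paper: split the roots by sign of height to produce the factor $(1+\e(-s/2))$, decompose the positive roots into $\TT$-orbits, unfold the torus integral against the sum over $\mu\in\Lambda$, and complete the square to extract $j_{\lambda,n}$ and the radial integral $\int_{\Lambda\otimes\RR}(c_n^2+u^2)^{-s}\e(-\ip{\lambda}{u})\,du$. The only difference is in evaluating that integral: you use the $\Gamma$-integral (Gaussian subordination) representation together with $\int_0^\infty t^{\nu-1}e^{-at-b/t}\,dt=2(b/a)^{\nu/2}K_\nu(2\sqrt{ab})$, whereas the paper passes to polar coordinates, gets a $J_{11}$ from the sphere integral, and quotes a table of Hankel transforms --- both yield the same value of $I$ and hence the stated formulas.
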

\begin{proof}
Take $z \in S_{k,h}$. Then \ref{l-choice-of-h-k} implies that $z/ \sqrt{k}$ belongs to the Weyl chamber $C$.
So by Conway's theorem (see \cite{REB:L}) $z$ and $\rho$ are on the same side of $r^{\bot}$ for every root $r \in L(2)$.
So if $r$ is a positive root, then $-\ip{r}{z} >  0$.
Using this and the decomposition of $L(2)$ into $\TT$-orbits from  \ref{topic-discrete-group-of-translations}, we obtain
\begin{align*}
E(z,s)
= (1 + (-1)^{-s}) \sum_{r : \op{ht}(r) > 0} (-\ip{r}{z})^{-s}  
= (1 + \e(-\tfrac{s}{2} )) 
\sum_{\substack{ n \geq 1 \\ \bar{l} \in \Lambda_n(1) } }
\sum_{T \in \TT} (-\ip{T r_{n,\bar{l}}}{ z})^{-s}.
\end{align*}
So
\begin{equation*}
a_{\lambda}(k, h ,s) 
=(1 + \e(-\tfrac{s}{2} )) \int_{v \in (\Lambda \otimes \RR)/ \Lambda} 
 \sum_{n , \bar{l} }  \sum_{ \mu \in \Lambda}
(-\ip{r_{n,\bar{l}}}{ T_{\mu} \phi( v ) })^{-s} \e( - \ip{\lambda}{v + \mu} ) d v.  
\end{equation*}
Since $T_{\mu} \phi(v) = \phi(v + \mu)$,
we can change the above expression into an integral over the whole vector space
$\Lambda \otimes \RR$ to get:
\begin{equation*}
a_{\lambda}(k,h,s)= (1 + \e(-\tfrac{s}{2} )) \sum_{n \geq 1}  \sum_{\bar{l} \in \Lambda_n(1)}
\int_{v \in \Lambda \otimes \RR } (- \ip{r_{n,\bar{l}} }{ \phi(v) })^{-s}
\e (-  \ip{\lambda}{v} ) d v. 
\end{equation*}
Recall that $r_{n,\bar{l}}=(l;n,\tfrac{l^2 - 2}{2n})$
and $\phi(v) = (vh ;h, \tfrac{ (vh)^2 + k}{2h}  )$.
Expanding the expression for $-\ip{r_{n,\bar{l}} }{ \phi(v) }$
and completing square we get
\begin{equation*}
-\ip{r_{n,\bar{l}} }{ \phi(v) } = \tfrac{n h}{2}( c_n^2 +(v -\tfrac{ l}{n}  )^2 )
\end{equation*}
where
$c_n^2 = (\tfrac{k}{h^2} - \tfrac{2}{n^2} )$.
We substitute $  u = v - \tfrac{l}{n} $ in the integral to get
\begin{equation*}
 a_{\lambda}(k,h,s)
= (1 + \e(-\tfrac{s}{2} ))\sum_{n \geq 1}  \sum_{\bar{l} \in \Lambda_n(1)}
\int_{\Lambda \otimes \RR} (\tfrac{n h}{2})^{-s} (c_n^2 + u^2)^{-s}
\e (- \ip{\lambda}{ u + \tfrac{l}{n}} ) du 
\end{equation*}
or
\begin{equation}
a_{\lambda} (k, h, s)
=(1 + \e(-\tfrac{s}{2} ))(\tfrac{2}{h})^s  \sum_{n \geq 1}  j_{\lambda, n} n^{-s}
\int_{\Lambda \otimes \RR}  (c_n^2 + u^2)^{-s}
\e(- \ip{\lambda}{u}) du
\label{eq-integral-expression-for-fourier-coeff}
\end{equation}
where
\begin{equation*}
j_{\lambda, n} = j_{\lambda, n}^{\Lambda}
= \sum_{l \in \Lambda_n(1)} \e( -\ip{l}{\lambda}/n)
= \sum_{l \in \Lambda_n(1)} \e(\ip{l}{\lambda}/n).
\end{equation*}
Note that the two sums are equal since $l \mapsto -l$ is an involution of $\Lambda_n(1)$.
\par
Assume $\lambda \neq 0$.
Let $\op{Gram}(\Lambda)$ be a Gram matrix of the  $\Lambda$. So
$u^2 = u' \op{Gram}(\Lambda) u$.
Let $\mu = \op{Gram}(\Lambda)^{1/2} \lambda$.
So $ \mu^2 =  \lambda^2$.
Use the substitution $v = \op{Gram}(\Lambda)^{1/2} u $
in the integral in \eqref{eq-integral-expression-for-fourier-coeff} to find
\begin{equation*}
I =  \int_{\Lambda \otimes \RR}  ( c_n^2 + u^2)^{-s}
\e (- \ip{ u }{\lambda}) du
=   \int_{\RR^{24}}  (c_n^2 + v^2)^{-s}
\e ( -(v \cdot \mu) ) dv.
\end{equation*}
Thus $I$ is the Fourier transform of a radial function.
Let $r = \abs{v}$, $\xi = v/\abs{v}$ and $d v = r^{23} d r d \omega(\xi)$, where
$\omega$ is the standard measure on $S^{23}$. Changing to polar co-ordinates we have
\begin{equation*}
I = \int_{r \in \RR_{+}}  r^{23} (c_n^2 + r^2)^{-s} \int_{\xi \in S^{23}} 
\e( (-r\abs{\mu}) (\xi \cdot \mu/\abs{\mu} )  ) d \omega(\xi) d r   
\end{equation*}
The integral over the sphere gives a Bessel function of the first kind.
Using lemma 9.10.2 of \cite{AAR:SF}, we evaluate the integral over the sphere to get
\begin{equation*}
I = \int_0^{\infty}  (c_n^2 + r^2)^{-s} r^{23} 2\pi J_{11}(2 \pi \abs{\lambda} r )(r\abs{\lambda})^{-11} d r.   
\end{equation*}
Substitute $x = 2 \pi \abs{\lambda} r$  to get
\begin{equation*}
I = (2 \pi \abs{\lambda})^{2 s - 12} \abs{\lambda}^{-12} 
\int_0^{\infty}  x^{12} ((2 \pi \abs{\lambda} c_n)^2 + x^2)^{-s}   J_{11}(x ) d x.
\end{equation*}
The last integral can be expressed in terms of the modified Bessel function $K_{\nu}$. For
 $\re(s) > 23/4$, one has
\begin{equation*}
I = 
(2 \pi \abs{\lambda})^{2 s - 12} \abs{\lambda}^{-12} 
(2 \pi \abs{\lambda} c_n)^{12 - s} K_{12 - s} (2 \pi \abs{\lambda} c_n) / (2^{s-1} \Gamma(s)).
\end{equation*}
(see the table of Hankel transforms in 
\cite{EMOT:TOIT}, vol II. p. 24, 
and substitute $y=1$, $\nu = 11$ and $s = \mu + 1 $ in formula (20)).  
Substituting in equation \eqref{eq-integral-expression-for-fourier-coeff} and
simplifying, we get the formula for the Fourier coefficients 
for $\lambda \neq 0$.
\par
For $\lambda = 0$, from equation \eqref{eq-integral-expression-for-fourier-coeff}
we get
\begin{equation*}
a_0(k, h, s)
=(1 + \e(-\tfrac{s}{2} ))(\tfrac{2}{h})^s  \sum_{n \geq 1}  j_{0, n} n^{-s}
\int_{\Lambda \otimes \RR}  (c_n^2 + u^2)^{-s} du \\
\end{equation*}
By changing to polar coordinates and changing variable $r = c_n x$, we obtain
\begin{equation*}
a_{0}(k,h,s) = (1 + \e(-\tfrac{s}{2} ))(\tfrac{2}{h})^s \op{vol}(S^{23}) \sum_{n \geq 1}  j_{0, n} n^{-s} c_n^{24 - 2s} 
\int_0^{\infty}   x^{23} (1 + x^2)^{-s} dx
\end{equation*}
The last integral is equal to $ \Gamma(12) \Gamma(s -12)/ 2 \Gamma(s)$
for $\op{Re}(s) > 12$.
(see the table of Mellin transforms in 
\cite{EMOT:TOIT} Vol I, p. 311 and substitute $h = 1, \alpha = 1$ $s  = 24$ in formula (30)).
Substituting the value of the integral and $\op{vol}(S^{23}) = 2 \pi^{12}/\Gamma(12)$
and simplifying, we get the formula for $a_0( k,h,s)$.
\end{proof}
Using theorem \ref{th-fourier-coeffients}, we now prove that
$E(z,s)$ can be analytically continued. 
The argument is similar to an argument used to prove analytic continuation
of real analytic Eisenstein series (see \cite{B:A}, p. 68-69).
Recall that we write $\nu = 2 h^2/k < 1$.
Note that $c_n^2 = (2/\nu) ( 1 - \nu/n^2)$.
\begin{theorem}
The series $E(z,s)$ can be analytically continued to a meromorphic function
on the half plane $\op{Re}(s) > 25/2$. The only poles of $E(z,s)$
in this half plane comes from the Fourier coefficient $a_0(k,h,s)$.
\label{th-analytic-continuation-of-E}
\end{theorem}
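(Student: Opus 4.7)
The plan is to treat each Fourier coefficient from Theorem \ref{th-fourier-coeffients} separately, and then show that the full Fourier series \eqref{eq-Fourier-series-of-E} reassembles into a meromorphic function on $\op{Re}(s) > 25/2$. The three key inputs are Weil's bound on $j_{\lambda,n}$ from Lemma \ref{l-bound-on-magnitude-of-j}, the meromorphic continuation of $L(j_\lambda, s)$ from Lemma \ref{l-analytic-continuation-of-Dirichlet-series-of-j}, and the exponential decay of the modified Bessel function $K_\nu(x)$ as $x\to\infty$.

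For $\lambda \neq 0$, I will show that the inner series
\[
\sum_{n \geq 1} j_{\lambda,n}\, n^{-s}\, c_n^{12-s}\, K_{12-s}(2\pi\abs{\lambda}c_n)
\]
converges absolutely and locally uniformly on $\op{Re}(s) > 25/2$. Since $\nu<1$, the sequence $c_n$ lies in the compact interval $[c_1,\sqrt{k}/h]$ with $c_1 > 0$, so for fixed $\lambda$ and $s$ the Bessel factor is bounded in $n$. Combined with the estimate $\abs{j_{\lambda,n}} \leq C_\epsilon\abs{\lambda}^{23/2} n^{23/2+\epsilon}$ from Lemma \ref{l-bound-on-magnitude-of-j}, the convergence reduces to that of $\sum n^{23/2+\epsilon-\op{Re}(s)}$, which holds on $\op{Re}(s) > 25/2 + \epsilon$; letting $\epsilon \to 0$ gives the claim. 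Thus $a_\lambda(k,h,s)$ is holomorphic on this half plane for every nonzero $\lambda$.

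For $\lambda = 0$, I will factor $c_n^{24-2s} = (k/h^2)^{12-s}(1-\nu/n^2)^{12-s}$ and use the binomial expansion in $1/n^2$, which converges term-by-term since $\nu < 1$. Interchanging the $n$- and $j$-sums, legitimate in the region $\op{Re}(s) > 24$ where absolute convergence holds, turns the inner series defining $a_0^*(k,h,s)$ into
\[
(k/h^2)^{12-s}\sum_{j \geq 0}\binom{12-s}{j}(-\nu)^j L(j_0,\,s+2j),
\]
and Lemma \ref{l-analytic-continuation-of-Dirichlet-series-of-j} identifies $L(j_0, w) = \zeta(w-23)/\zeta(w-11)$, which is meromorphic on all of $\CC$. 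Geometric decay from $\nu^j$ will dominate the polynomial growth of $\binom{12-s}{j}$ and the boundedness of $L(j_0, s+2j)$ as $j \to \infty$, so the $j$-series will converge uniformly on compact subsets of any vertical strip and define a meromorphic extension of $a_0(k,h,s)$ to $\CC$.

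Finally, to assemble \eqref{eq-Fourier-series-of-E} I will bound $\abs{a_\lambda(k,h,s)}$ uniformly in $\lambda$. The asymptotic $K_\nu(x) \sim \sqrt{\pi/(2x)}\,e^{-x}$ together with the uniform lower bound $c_n \geq c_1 > 0$ yields, on compacta in $\op{Re}(s) > 25/2$, an estimate of the form $\abs{a_\lambda(k,h,s)} \lesssim \abs{\lambda}^{A} e^{-2\pi c_1\abs{\lambda}}$ for some exponent $A$ depending on $\op{Re}(s)$. Since $\Lambda$ has only polynomial lattice-point growth in norm, the Fourier series converges absolutely and uniformly on the relevant compacts, giving a meromorphic function whose only poles come from $a_0(k,h,s)$. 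The main obstacle I anticipate is the second step: cleanly justifying the interchange of summations and verifying that the rearranged double series, valid on a region where the original series diverges, genuinely represents the analytic continuation and not merely a formal expression; a secondary technical point is to make the uniform estimate on $\abs{a_\lambda}$ in Step 3 with constants that do not blow up as $\op{Re}(s)$ approaches $25/2$ from above.
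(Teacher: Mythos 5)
Your proposal is correct and follows essentially the same route as the paper: Weil's bound (Lemma \ref{l-bound-on-magnitude-of-j}) gives holomorphy of each $a_\lambda$, $\lambda \neq 0$, on $\op{Re}(s)>25/2$; the constant term is continued via the Dirichlet series $L(j_0,s)=\zeta(s-23)/\zeta(s-11)$; and the exponential decay of $K_{12-s}$ together with the uniform lower bound $c_n \geq c_1 > 0$ reassembles the Fourier series. The one step you flag as your main obstacle --- justifying the term-by-term expansion of $(1-\nu/n^2)^{12-s}$ and the interchange of sums for $a_0$ --- is exactly what the paper's Lemma \ref{l-analytic-continuation-of-perturbation} is built to handle: it uses a \emph{finite} Taylor expansion of $g(s,y)=(1-\nu y^2)^{12-s}$ in $y$ plus an absolutely convergent remainder series, so only finitely many meromorphic terms $L(j_0,s+j)$ ever appear and no infinite sum of meromorphic functions needs to be controlled.
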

\begin{proof}
From  \ref{l-analytic-continuation-of-Dirichlet-series-of-j}, 
we know $\sum_n j_{0, n} n^{-s} = \zeta(s - 23)/\zeta(s - 11)$; so this Dirichlet series
has meromorphic continuation to $\CC$. 
Lemma 
\ref{l-analytic-continuation-of-perturbation} 
(applied with $f(n) = j_{0,n}$ and $g(s,y) = (1 - \nu y^2)^{12 - s}$)
implies that
the infinite series for $a_0(k,h,s)$ given in theorem \ref{th-fourier-coeffients} has an 
meromorphic continuation to $\CC$. Now let $\lambda \neq 0$.
The bound on $j_{\lambda,n}$ from lemma \ref{l-bound-on-magnitude-of-j}
shows that the infinite series for $a_{\lambda}(k,h,s)$ in \ref{th-fourier-coeffients}
converges absolutely and uniformly on compacta, for $\op{Re}(s) >25/2$. 
So each Fourier coefficient defines an analytic function on $\op{Re}(s) > 25/2$.
Let  $s $ vary on a compact subset $A$ of $\lbrace z : \op{Re}(z) > 25/2 \rbrace$.
Let $\kappa = \sqrt{2(1  - \nu)/\nu} > 0$. 
Note that $c_n \geq  \kappa$ for all $n$.
So 
\begin{equation*}
\abs{ c_n^{12 - s} } = (1/c_n)^{\op{Re}(s - 12)} \leq (1/\kappa)^{\op{Re}(s - 12)}
\text{\; and \;} 
e^{- \pi \abs{ \lambda} c_n}  \leq e^{- \pi \kappa \abs{\lambda}}.
\end{equation*}
The Macdonald Bessel function $K_s(y)$ decays exponentially as $y \to \infty$.
It is convenient to use the bound $\abs{K_s(y)} \leq e^{-y/2} K_{\op{Re}(s)} (2)$
for $y > 4$ (see \cite{B:A}, p. 66). 
Fix  $\epsilon >0$ such that $\op{Re}(s) > 2 \epsilon + 25/2$ for all $s \in A$.
From \ref{l-bound-on-magnitude-of-j},
we know that there exists a constant $C_{\epsilon}$ such that
$\abs{j_{\lambda,n}} \leq C_{\epsilon} \abs{\lambda}^{23/2} n^{23/2 + \epsilon}$. 
Using the expression for $a_{\lambda}(k,h,s)$ from \ref{th-fourier-coeffients} and 
the bound for $\abs{j_{n,\lambda}}$, we find
 \begin{align*}
 \abs{a_{\lambda}(k,h,s)}
 \leq
 C(s) \abs{\lambda}^{\op{Re}(s) - 1/2} e^{- \pi \kappa \abs{ \lambda}}
 \end{align*}
where 
$C(s)$   is a fixed positive real valued
 continuous function on $ A$ 
 that does not depend on $\lambda$.
 Since $C(s)$ and $\op{Re}(s)$ 
 stay bounded on a compact set,
$\abs{a_{\lambda}(k,h,s)}$ decays exponentially as a function of $\abs{\lambda}$ as
 $\abs{\lambda} \to \infty$. It follows that the Fourier series for $E(z,s)$ given in 
 equation \eqref{eq-Fourier-series-of-E} converges
 absolutely and
 uniformly on compact subsets of $\op{Re}(s) > 25/2$.
 \end{proof}
\begin{remark}
\label{remark-analytic-continuation-of-Fourier-coefficients}
Lemma \ref{l-analytic-continuation-of-perturbation} applied with
$f(n) = j_{n, \lambda}$ and
\begin{equation*}
g(s, y) = \sqrt{1 - \nu y^2}^{12-s} K_{12 - s}( 2 \pi \abs{\lambda} \sqrt{2/\nu} \sqrt{1 - \nu y^2} )
\end{equation*}
implies that each Fourier coefficient $a_{\lambda}(k,h,s)$ has
analytic continuation to a meromorphic function on the whole $s$-plane. But we are not able 
to establish how these extended functions decay as $\abs{\lambda} \to \infty$.
So we are unable to prove analytic continuation of $E(z,s)$ beyond
the line $\op{Re}(s) = 25/2$.
\end{remark}

%
%
%
%
%
\appendix
\section{Some quadratic Gauss sums }
\label{appendix-gauss-sums}
Let $K$ be an even lattice, not necessarily positive definite. 
Let $q = p^r$ where $p$ is a  prime and $r$ is a positive integer.
Let $Q: K/q \to \QQ/\ZZ$ be the quadratic form 
defined by $Q(x) = x^2/2 q$.
Fix an integer $c$ relatively prime to $p$. 
We want to calculate some quadratic Gauss sums of the form
\begin{equation*}
\theta_{q,c}(K) =  \sum_{x \in K/q}  \e(c Q(x)).
\end{equation*}
Note that if $q$ is odd, then $Q$ is a well defined quadratic form on
$K/ q$ for any integral lattice $K$. But if $q = 2^r$,
then $K$ has be even for $Q$ to be well defined.
\par
If $q$ is odd, then the quadratic form $Q$ can be ``diagonalized modulo $q$"
(see lemma \ref{l-odd-diagonalization}).
Then calculation of $\theta_{q,c}$ reduces to calculation of well known one
dimensional Gauss sums.
Proof of lemma \ref{l-odd-diagonalization} is similar to diagonalization
of quadratic forms over $p$-adic integers (see \cite{CS:SPLAG}, Chapter 15, section 4.4).
If $\mathbf{u} = (u_1, \dotsb, u_m)$ is a finite sequence of vectors in $K$, then
$\op{Gram}(\mathbf{u})$ denotes the matrix $(\!( \ip{u_i}{u_j} )\!)$. 
If $\mathbf{u}$ is a $\ZZ$-basis of $K$, then we say $\op{Gram}(\mathbf{u})$ is 
a {\it Gram matrix} of $K$ and the determinant of this matrix is denoted by $\op{det}(K)$.
\begin{lemma}
 Let $q = p^r$ where $p$ is an odd prime and $r$ is a positive integer. 
Let $K$ be an integral lattice of rank $m$ such that $p \nmid \op{det}(K)$.  
Then $K$ has a $\ZZ$-basis $\mathbf{u} = (u_1, \dotsb, u_m)$ such that 
$p \nmid u_i^2$ for all $i$ and $q \mid \ip{u_i}{u_j}$ for all $i \neq j$.
\label{l-odd-diagonalization}
\end{lemma}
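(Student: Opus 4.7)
The plan is to construct the desired basis by induction on the rank $m$ using a Gram--Schmidt style process modulo $q$. The key inputs are (i) that an integral lattice whose determinant is a unit mod $p$ always contains a vector of $p$-unit norm when $p$ is odd, and (ii) that a $p$-unit is invertible mod $q = p^r$, so one can ``clear cross terms'' exactly.

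First I would establish the base case / first vector. Over $\FF_p$, the reduction of a Gram matrix of $K$ is a non-degenerate symmetric bilinear form (since $p \nmid \op{det}(K)$). Because $p$ is odd, such a form must represent a nonzero value: if $\ip{v}{w} \not\equiv 0$ but $v^2 \equiv w^2 \equiv 0 \bmod p$, then $(v+w)^2 \equiv 2 \ip{v}{w} \not\equiv 0 \bmod p$. Pick $u_1 \in K$ with $p \nmid u_1^2$; note $u_1$ is automatically primitive, so it extends to a $\ZZ$-basis $(u_1, v_2, \dotsc, v_m)$ of $K$.

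Next comes the orthogonalization step. Since $u_1^2$ is a unit in $\ZZ/q\ZZ$, one can solve the congruence $c_i u_1^2 \equiv \ip{u_1}{v_i} \bmod q$ for each $i \geq 2$. Setting $w_i = v_i - c_i u_1$ yields a new $\ZZ$-basis $(u_1, w_2, \dotsc, w_m)$ of $K$ with $q \mid \ip{u_1}{w_i}$ for all $i \geq 2$. Let $K_1 = \ZZ w_2 + \dotsb + \ZZ w_m$. Expanding $\op{det}(K)$ along the first row of the Gram matrix of the new basis and reducing mod $p$ (using $p \mid q \mid \ip{u_1}{w_i}$) gives $\op{det}(K) \equiv u_1^2 \cdot \op{det}(K_1) \bmod p$; since the left side and $u_1^2$ are both $p$-units, so is $\op{det}(K_1)$.

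Finally, apply the induction hypothesis to $K_1$ (still an integral lattice with $p$-unit determinant) to obtain a $\ZZ$-basis $(u_2, \dotsc, u_m)$ of $K_1$ with $p \nmid u_i^2$ and $q \mid \ip{u_i}{u_j}$ for $i \neq j$ in $\{2, \dotsc, m\}$. Since every $u_j$ with $j \geq 2$ is an integer combination of the $w_i$ and $\ip{u_1}{w_i} \equiv 0 \bmod q$, linearity gives $\ip{u_1}{u_j} \equiv 0 \bmod q$ as well. Replacing $(w_2, \dotsc, w_m)$ by $(u_2, \dotsc, u_m)$ inside the basis of $K$ produces the desired basis $(u_1, \dotsc, u_m)$. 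The only substantive point is the existence of the first anisotropic vector $u_1$ (which uses oddness of $p$); everything else is routine linear algebra over $\ZZ/q\ZZ$.
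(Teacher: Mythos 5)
Your proof is essentially the paper's: produce a vector of $p$-unit norm (using oddness of $p$ via $(v+w)^2 \equiv 2\ip{v}{w} \bmod p$ when all diagonal entries vanish mod $p$), clear the cross terms exactly mod $q$ using invertibility of $u_1^2$ in $\ZZ/q\ZZ$, check that the complementary sublattice still has $p$-unit determinant from the block-diagonal shape of the Gram matrix mod $p$, and induct on the rank. The one inaccuracy is the claim that $p \nmid u_1^2$ makes $u_1$ ``automatically primitive'' --- it does not (e.g.\ $u_1 = 2v$ with $p$ odd can have $p$-unit norm without being primitive); but the fix is immediate: writing $u_1 = d v$ with $v$ primitive gives $p \nmid d^2 v^2$, hence $p \nmid v^2$, so you may replace $u_1$ by $v$. (The paper sidesteps this point by modifying an existing $\ZZ$-basis in place rather than extending a chosen vector to a basis.)
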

\begin{proof}
Claim 1: {\it $K$ has a basis $(u_1, \dotsb, u_m)$ such that
$p \nmid u_j^2$ for some $j$.}
\par
Choose any basis $(u_1, \dotsb, u_m)$  of $K$. 
If possible, suppose $p \mid u_j^2$ for all $j$. Since $p \nmid \op{det}(K)$,
there must exist $i \neq j$, such that $p \nmid \ip{u_i}{u_j}$. 
Then $p \nmid (u_i + u_j)^2$. Replace $u_j$ by $(u_i + u_j)$
in the basis. This proves claim 1.
\par
Claim 2: {\it $K$ has a basis $ (u_1, \dotsb, u_m)$
such that $p \nmid u_1^2$ and $q \mid \ip{u_1}{u_j}$
for $j \geq 2$. } 
\par
Using claim 1, and reindexing  if necessary, we can choose a basis
$(u_1, \dotsb, u_m)$ of $K$ such that $p \nmid u_1^2$. 
Let $k \in \ZZ$ such that  
$k u_1^2 \equiv 1 \bmod q$. Let $u'_j = u_j - k \ip{u_1}{u_j} u_1$ for $j = 2, \dotsb, m$. 
Then $\ip{u_1}{u'_j}  \equiv 0 \bmod q$.
So $(u_1, u'_2, \dotsb, u'_n)$ is a basis of $K$ with the required property. 
This proves claim 2. 
\par
Choose a basis $\mathbf{u} = (u_1, \dotsb, u_m)$ as in claim 2. 
Write $d_1 = u_1^2 \bmod p$.
Let $K_1$ be the $\ZZ$-span of $\lbrace u_2, \dotsb, u_m \rbrace$.
Note that since $q$ divides $\ip{u_1}{u_j}$ for all $j \geq 2$, we have
$\ip{u_1}{K_1} \subseteq q \ZZ$.
Let $G_1= \op{Gram}( u_2, \dotsb, u_m) \bmod p$. 
Note that 
\begin{equation*}
\op{det}(\op{Gram}(\mathbf{u} )) \bmod p = 
\op{det}( \op{Gram} (\mathbf{u}) \bmod p ) 
= \op{det} \bigl( \begin{smallmatrix} d_1 & 0 \\ 0 & G_1 \end{smallmatrix} \bigr)
= d_1 \op{det}(G_1).
\end{equation*} 
Since $\op{det}(\op{Gram}(\mathbf{u} )) \bmod p \neq 0$, 
we must have $\op{det} (G_1) \neq 0$. So $p \nmid \op{det}(K_1)$.
By induction on $m$, $K_1$ has a $\ZZ$-basis satisfying the
conditions of the lemma. Adjoining $u_1$ to this basis yields
 a basis  of $K$ satisfying the conditions of the lemma.
  \end{proof}
  \begin{lemma}
  Assume the setup of lemma \ref{l-odd-diagonalization}. Let $c$ be an integer relatively prime to $p$. Then
  \begin{equation*}
\theta_{q,c}(K)= 
  q^{m/2} \varepsilon_{q}^m  \Bigl( \frac{ (2 \bmod q)^{-1} c}{q} \Bigr)^m \Bigl( \frac{ \op{det}(K)}{q} \Bigr). 
  \end{equation*}
  where 
   $\varepsilon_n = 1$ if $n \equiv 1 \bmod 4$ and $\varepsilon_n = i$ if $n \equiv -1 \bmod 4$.
  \label{l-odd-gauss-sum}
 \end{lemma}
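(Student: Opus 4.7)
The plan is to use the diagonalization Lemma \ref{l-odd-diagonalization} to reduce the $m$-dimensional Gauss sum to a product of $m$ one-dimensional quadratic Gauss sums, then apply the classical evaluation of the latter.

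First I would fix a $\ZZ$-basis $\mathbf{u} = (u_1, \dotsc, u_m)$ of $K$ satisfying the conclusion of Lemma \ref{l-odd-diagonalization}, so that $p \nmid u_i^2$ for every $i$ and $q \mid \ip{u_i}{u_j}$ for every $i \neq j$. Since $q$ is odd, I will work with the invertible element $\bar{2} \in \ZZ/q$ (the inverse of $2$ modulo $q$) and write $cQ(x) \equiv c\bar{2} x^2/q \pmod{\ZZ}$; this also makes $Q$ manifestly well-defined on $K/qK$ for integral $K$. Writing $x = \sum_i x_i u_i$ with $x_i \in \ZZ/q$, the expansion
\begin{equation*}
x^2 = \sum_i x_i^2 u_i^2 + 2\sum_{i<j} x_i x_j \ip{u_i}{u_j}
\end{equation*}
combined with $q \mid \ip{u_i}{u_j}$ shows that the cross terms contribute integers to $c\bar{2} x^2/q$, so
\begin{equation*}
\e(cQ(x)) = \prod_{i=1}^m \e\!\bigl(c\bar{2}\, u_i^2\, x_i^2 / q\bigr).
\end{equation*}

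The sum over $K/q$ then factors as $\theta_{q,c}(K) = \prod_{i=1}^m \sum_{x_i \in \ZZ/q} \e(c\bar{2} u_i^2 x_i^2/q)$. Since $p \nmid c u_i^2$, each factor is a standard one-dimensional quadratic Gauss sum with parameter coprime to $q$, so it equals $\varepsilon_q \, q^{1/2} \bigl(\frac{c\bar{2} u_i^2}{q}\bigr)$. Multiplying the $m$ factors gives
\begin{equation*}
\theta_{q,c}(K) = q^{m/2} \varepsilon_q^m \Bigl(\tfrac{c\bar{2}}{q}\Bigr)^m \Bigl(\tfrac{\prod_i u_i^2}{q}\Bigr).
\end{equation*}

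To finish, I would identify $\prod_i u_i^2$ with $\op{det}(K)$ modulo $q$. Expanding $\op{det}(\op{Gram}(\mathbf{u}))$ as a sum over permutations, the identity permutation contributes $\prod_i u_i^2$, while every other permutation has at least one off-diagonal factor $\ip{u_i}{u_j}$ with $i\neq j$, hence is divisible by $q$. Therefore $\prod_i u_i^2 \equiv \op{det}(K) \pmod q$, which gives $\bigl(\frac{\prod u_i^2}{q}\bigr) = \bigl(\frac{\op{det}(K)}{q}\bigr)$ and yields the claimed formula.

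The only potentially subtle point is the passage from $Q(x) = x^2/2q$ to $c\bar{2} x^2/q \bmod \ZZ$ and the verification that cross terms vanish modulo $q$; both are routine once one has the diagonalizing basis in hand, so I do not expect any real obstacle. The standard one-dimensional evaluation and the multiplicativity of the Jacobi symbol then deliver the rest essentially for free.
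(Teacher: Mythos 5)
Your argument is correct and follows the paper's proof essentially verbatim: diagonalize the form via Lemma \ref{l-odd-diagonalization}, split $\theta_{q,c}(K)$ into a product of one-variable quadratic Gauss sums, evaluate each with the classical formula $\sum_{x \in \ZZ/n}\e(\alpha x^2/n) = \varepsilon_n \bigl(\tfrac{\alpha}{n}\bigr)\sqrt{n}$, and finish by noting $\prod_i u_i^2 \equiv \op{det}(K) \bmod q$ (you supply the permutation-expansion justification that the paper leaves implicit). The only cosmetic difference is that you replace the factor $1/2$ by $\bar{2} \bmod q$ at the outset --- legitimate here because $K$ is even by the appendix's standing hypothesis --- whereas the paper keeps $2q$ in the denominator of the exponent throughout.
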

\begin{proof}
Choose a basis $\mathbf{u}$ for $K$ as in lemma \ref{l-odd-diagonalization}. Then
 $q \mid \ip{u_i}{u_j}$ for all $i \neq j$ implies 
\begin{align*}
\theta_{q,c}(K) 
= \sum_{x_1, \dotsb, x_m \in \ZZ/ q} 
\e( c (x_1 u_1 + \dotsb + x_m u_m)^2/ 2 q)  
= \prod_{j} \sum_{x_j \in \ZZ/ q } \e( c u_j^2 x_j^2/ 2 q).
\end{align*}
If $n \in \mathbb{N}$, $\alpha \in \ZZ$ are such that $gcd(2\alpha,n) = 1$, then 
$\sum_{x \in \ZZ/n } \e( \alpha x^2/ n)  = \varepsilon_n \bigl( \frac{\alpha}{n} \bigr)  \sqrt{n}$
(see  \cite{IK:ANT}, page 52). 
Using this and the fact that $p \nmid u_j^2/2$, we find
\begin{align*}
\theta_{q,c}(K)
= \prod_j  \varepsilon_q \Bigl( \frac{c u_j^2/2}{q} \Bigr)  \sqrt{q} 
= q^{m/2} \varepsilon_q^m  \Bigl( \frac{(2 \bmod q)^{-1} c}{q} \Bigr)^m \Bigl( \frac{ u_1^2 \dotsb u_m^2}{q} \Bigr) .
\end{align*}
The  lemma  follows once we note that
$u_1^2 \dotsb u_m^2 \equiv \op{det}(K) \bmod q$.
\end{proof}
  \begin{lemma}
  Let $c$ be an odd integer. 
  Let $K$ be an even self-dual lattice. 
  Then  $\theta_{2^0,c}(K) = 1$ and $\theta_{2^r,c}(K)= 2^{\op{rk}(K)} \theta_{2^{r-2},c}(K)$
  for all $r \geq 2$.
    \label{l-even-gauss-sum}
  \end{lemma}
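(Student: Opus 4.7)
The plan is to compute $\theta_{2^r,c}(K)$ directly by a dyadic splitting argument, exploiting both the evenness of $K$ and the self-duality assumption (the latter enters via the non-degeneracy of the mod-$2$ bilinear form on $K/2$).

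First, the case $r=0$ is immediate, since $K/2^0K=\{0\}$ forces $\theta_{1,c}(K)=\e(0)=1$. For the recursion with $r\geq 2$, I would parametrize $K/2^r$ by writing each class as $l = u + 2^{r-1}y$ with $u$ running over a set of coset representatives of $K/2^{r-1}$ and $y$ running over a set of coset representatives of $K/2$. (A short count shows this is a bijection.) Expanding gives
\begin{equation*}
\frac{c\,l^2}{2^{r+1}} \;=\; \frac{c\,u^2}{2^{r+1}} \;+\; \frac{c\,\langle u,y\rangle}{2} \;+\; c\cdot 2^{r-3}y^2 \pmod{1}.
\end{equation*}
The key observation is that since $K$ is even, $y^2/2\in\ZZ$, so $c\cdot 2^{r-3}y^2 = c\cdot 2^{r-2}(y^2/2)\in\ZZ$ for every $r\geq 2$. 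Therefore $\e(c\cdot 2^{r-3}y^2)=1$ and the Gauss sum factors as
\begin{equation*}
\theta_{2^r,c}(K) \;=\; \sum_{u\in K/2^{r-1}} \e\!\bigl(cu^2/2^{r+1}\bigr) \sum_{y\in K/2} \e\!\bigl(c\langle u,y\rangle/2\bigr).
\end{equation*}

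Next I would evaluate the inner sum. Since $c$ is odd, $\e(c\langle u,y\rangle/2)=(-1)^{\langle u,y\rangle}$, and $y\mapsto(-1)^{\langle u,y\rangle}$ is a character on the $\FF_2$-vector space $K/2$. Because $K$ is self-dual, this character is trivial precisely when $u\in 2K^\vee = 2K$ modulo $2^{r-1}K$; otherwise the sum over $y\in K/2$ vanishes. So only the classes $u=2u'$ with $u'\in K/2^{r-2}$ contribute, each multiplied by $2^{\op{rk}(K)}$. Substituting $u=2u'$ converts the outer factor $\e(cu^2/2^{r+1})$ to $\e(c(u')^2/2^{r-1})$, and the surviving sum is precisely $\theta_{2^{r-2},c}(K)$, yielding the claimed recursion $\theta_{2^r,c}(K)=2^{\op{rk}(K)}\theta_{2^{r-2},c}(K)$.

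The only delicate point is the border case $r=2$: there the term $c\cdot 2^{r-3}y^2 = cy^2/2$ looks fractional, and one must remember that evenness of $K$ still makes this an integer. This is where the evenness hypothesis on $K$ is genuinely used; without it the factorization of the sum fails at $r=2$. The self-duality hypothesis, in turn, is precisely what is needed to identify the kernel of the character $y\mapsto(-1)^{\langle u,y\rangle}$ on $K/2$ as coming from $2K$. Once these two ingredients are in place the rest is bookkeeping.
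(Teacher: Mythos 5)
Your proof is correct and follows essentially the same route as the paper's: the decomposition $l = u + 2^{r-1}y$ over $K/2^{r-1}\times K/2$, killing the $2^{r-3}y^2$ term via evenness, and using self-duality to show the inner character sum over $K/2$ vanishes unless $u\in 2K$, whence the substitution $u=2u'$ gives the recursion. No gaps; the border case $r=2$ you flag is handled exactly as in the paper (via $y^2/2\in\ZZ$).
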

\begin{proof}
Let $u$ and $y$ run over a set of coset representatives for $K/2^{r-1}$
and $K/2$ respectively. Then $(u + 2^{r-1} y)$ runs over a set of coset
representatives
for $K/ 2^r $. 
Note that 
\begin{align*}
\tfrac{1}{2^{r+1}} c ( u + 2^{r-1} y)^2
\equiv \tfrac{1}{2^{r+1}} c u^2 + \tfrac{1}{2} \ip{u}{y} \bmod \ZZ
\end{align*}
since $c \equiv 1 \bmod 2$, $r - 3 \geq -1$ and $y^2/2 \in \ZZ$  for all $y \in K$.
It follows that
\begin{align*}
\theta_{2^r,c}(K)
= \sum_{u} \e( c u^2/2^{r+1} ) \sum_{y  } \e(   \ip{u}{y}   /2).
\end{align*}
If $y \mapsto  \e( \ip{u}{y}/2)$ is a nontrivial character on $K/2$,
then the inner sum is equal to $0$. 
Note that $y \mapsto  \e( \ip{u}{y}/2)$ is the trivial character if and only if
$\ip{\tfrac{u}{2}}{x} \in \ZZ$ for all $x \in K$.
Since $K$ is self-dual, this is equivalent to saying $\tfrac{u}{2} \in K$.
Letting $u = 2v$, we obtain
\begin{align*}
\theta_{2^r,c}(K)
= \abs{K/2} \sum_{u \in 2 K/ 2^{r-1}} \e \bigl( \tfrac{c u^2}{2^{r+1}}  \bigr)  
=  2^{\op{rk}(K)} \sum_{v \in K/ 2^{r-2}} \e \bigl( \tfrac{c v^2}{ 2^{r-1}} \bigr)  
= 2^{\op{rk}(K)} \theta_{2^{r-2},c}(K).
\end{align*}
\end{proof}
\begin{lemma}
Let $K$ be an even self-dual lattice. Let $p$ be a prime. Let $q = p^r$ for 
some integer $r \geq 1$. Let $c$ be an integer relatively prime to $p$. 
As defined above, let $\theta_{q,c}(K) = \sum_{x \in K/q} \e( c x^2/2 q)$.
One has $ \theta_{q,c}(K) = q^{\op{rk}(K)/2}$.
 \label{l-theta-q}
\end{lemma}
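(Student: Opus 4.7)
The plan is to split on whether $p$ is odd or $p = 2$ and invoke Lemmas \ref{l-odd-gauss-sum} and \ref{l-even-gauss-sum} respectively.

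For $p$ odd, Lemma \ref{l-odd-gauss-sum} already produces the factor $q^{m/2}$ (with $m = \op{rk}(K)$), so the task reduces to showing that the trailing factor $\varepsilon_q^{\,m}\bigl((2\bmod q)^{-1}c/q\bigr)^m \bigl(\det K/q\bigr)$ equals $1$. Two inputs from ``even self-dual'' do the job. First, $m$ must be even, since reducing a Gram matrix of $K$ modulo $2$ yields an alternating $\FF_2$-matrix of nonzero determinant, impossible in odd rank. So the middle Jacobi symbol squares to $1$ and drops out, while $\varepsilon_q^{\,m} = (\varepsilon_q^{\,2})^{m/2} = ((-1)/q)^{m/2}$. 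Second, writing the signature of $K$ as $(m_+,m_-)$, one has $\det K = (-1)^{m_-}$, so the trailing factor becomes $((-1)/q)^{m/2+m_-}$. The even unimodular signature relation $m_+ - m_- \equiv 0 \bmod 8$ forces $m/2+m_- = (m_++3m_-)/2$ to be even, so this factor is indeed $1$.

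For $p = 2$, I would induct on $r$ using Lemma \ref{l-even-gauss-sum}. Its recursion $\theta_{2^r,c}(K) = 2^m\theta_{2^{r-2},c}(K)$ for $r \geq 2$, combined with $\theta_{1,c}(K) = 1$, reduces the claim to the single base case $r = 1$, which has to be handled by hand. In this base case, $c$ odd makes $\e(cx^2/4) = (-1)^{x^2/2}$, so $\theta_{2,c}(K) = \sum_{x \in K/2K}(-1)^{x^2/2}$. The map $q_K(x) = x^2/2 \bmod 2$ is a well-defined $\FF_2$-valued quadratic form on $V = K/2K$ whose associated bilinear form is the reduction modulo $2$ of the lattice pairing; this polarization is non-degenerate (as $K$ is self-dual) and alternating (as $K$ is even), so a standard computation gives $|\theta_{2,c}(K)| = 2^{m/2}$ with sign $(-1)^{\op{Arf}(q_K)}$. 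Thus the base case reduces to showing $\op{Arf}(q_K) = 0$.

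To kill the Arf invariant I would argue by multiplicativity of $\theta_{2,c}$ under orthogonal sums plus a classification step: direct enumeration gives $\theta_{2,c}(\mathrm{II}_{1,1}) = 2$ and $\theta_{2,c}(E_8) = 2^4$, and Milnor's classification writes every indefinite even self-dual lattice as an orthogonal sum of these two blocks; a positive-definite $K$ can be stabilized to the indefinite $K \oplus \mathrm{II}_{1,1}$ to reduce to that case. An even cleaner alternative would invoke Milgram's formula, which evaluates the Gauss sum of a non-degenerate finite quadratic form as $\sqrt{|V|}\,e^{2\pi i\sigma/8}$ with $\sigma \equiv m_+ - m_- \bmod 8$ in our setting; the even unimodular signature condition then instantly annihilates the phase. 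The main obstacle is this $r = 1$, $p = 2$ base case (equivalently, the vanishing of $\op{Arf}(q_K)$); once it is in hand, the rest of the proof is routine.
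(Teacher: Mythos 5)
Your proof is correct, and it splits into two halves that compare differently with the paper's. For $p=2$ you end up at essentially the paper's argument: the paper also reduces via Lemma \ref{l-even-gauss-sum} to the $r\le 1$ cases, computes $\theta_{2,c}(\mathrm{II}_{1,1})=2$ and $\theta_{2,c}(E_8)=2^4$ by hand (the latter by counting norm-$0$, $2$, $4$ coset representatives of $E_8/2$), and then invokes multiplicativity of $\theta_{q,c}$ under orthogonal sums, stabilization by $\mathrm{II}_{1,1}$, and the classification of indefinite even self-dual lattices --- so your route (a) for killing the Arf invariant is exactly what the paper does, and your Milgram alternative is a legitimate shortcut it does not take. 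For $p$ odd, however, your argument is genuinely different and more self-contained: the paper again only checks the two building blocks ($E_8$ immediately from Lemma \ref{l-odd-gauss-sum}, and $\mathrm{II}_{1,1}$ by observing that $\varepsilon_q^2$ and $(\tfrac{-1}{q})$ are simultaneously $\pm 1$) and then relies on the same classification-plus-stabilization step, whereas you evaluate the trailing factor directly for an arbitrary even self-dual $K$ using $m$ even, $\varepsilon_q^2=(\tfrac{-1}{q})$, $\det K=(-1)^{m_-}$, and $m_+-m_-\equiv 0\bmod 8$. Your computation $(m_++3m_-)/2\equiv 0\bmod 2$ is right, so the odd case needs no structure theory beyond the signature congruence; what the paper's uniform classification argument buys instead is that a single verification on $E_8$ and $\mathrm{II}_{1,1}$ disposes of all $q$ at once. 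One small point worth a sentence in either treatment: the classification as quoted covers $m_+\ge m_-$, and for $m_+<m_-$ one needs $E_8(-1)$ blocks, which is harmless since $\theta_{q,c}$ of these lattices is real.
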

\begin{proof}
First we verify that the lemma holds for $K = \mathrm{II}_{1,1}$ and for $K = E_8$.
\par
First, assume $q$ is odd. When $K = E_8$, the lemma
is immediate from lemma \ref{l-odd-gauss-sum}.
For $K = \mathrm{II}_{1,1}$, lemma \ref{l-odd-gauss-sum} yields
$\theta_{q,c}(\mathrm{I I}_{1,1} ) = q \varepsilon_q^2 \bigl( \frac{-1}{q} \bigr)$.
Note that
both  $\varepsilon_q^2$ and $\bigl( \frac{-1}{q} \bigr)$
are equal to $-1$ if and only if  $q$ is an odd power of a $3$-mod-$4$ prime 
and both are equal to $1$ otherwise. 
It follows that $\theta_{q,c}(\mathrm{I I}_{1,1} ) = q $. 
\par
Now assume $q = 2^r$. We want to verify that 
$\theta_{q,c}( \mathrm{II}_{1,1}) = q$
and $\theta_{2^r,c}( E_8) = q^4$.
In view of lemma \ref{l-even-gauss-sum},
it is enough to check that $\theta_{2,c}(\mathrm{II}_{1,1}) = 2$ and
$\theta_{2,c}(E_8) = 2^4$.
The calculation for $\mathrm{II}_{1,1}$ is trivial.
For the $E_8$ lattice, we know that a convenient complete
set of coset representatives for $E_8/2$ is given
by the zero vector, the vectors of norm $2$ chosen up to sign and one norm $4$
vector chosen from each ``coordinate frame"
of size $16$. In particular 
$\abs{ E_8(0) }+  \tfrac{1}{2} \abs{E_8(2) } +  \tfrac{1}{16} \abs{ E_8(4) }  = 2^{8}$.
It follows that
\begin{align*}
\theta_{2,c}(E_8)
=\abs{E_8(0) } -  \tfrac{1}{2} \abs{ E_8(2) } + \tfrac{1}{16} \abs{ E_8(4) }  
= 2^{8} -  \abs{E_8(2)}  
= 2^{4}.
\end{align*}
\par
Given even lattices $K_1$ and $K_2$, one verifies
$\theta_{q,c}(K_1 \oplus K_2)  = \theta_{q,c}(K_1) \theta_{q,c}(K_2)$. 
In particular, the lemma holds for $K$ if and only if it holds for $K \oplus \mathrm{II}_{1,1}$.
So, adding a hyperbolic cell $\mathrm{II}_{1,1}$ to $K$ if necessary, we may assume without
loss that $K$ is indefinite. The lemma follows by multiplicativity of 
$\theta_{q,c}$, since an indefinite even self-dual lattice
$K$ is isomorphic to a direct sum of certain number of copies of
$E_8$ and $\mathrm{II}_{1,1}$. 
 \end{proof}
%

%
%
%
%
%
\section{A lemma on analytic continuation}
\label{section-analysis}
We prove an elementary lemma showing that under certain conditions,
if a Dirichlet series $L(f,s)$ has analytic continuation, then a certain perturbation
$L^{\sim}_{g}(f,s)$ also has analytic continuation.
\begin{lemma}
Assume $f(n) = O(n^{\alpha - 1})$ for some $\alpha > 1$.
Assume that $L(f,s)= \sum_{n = 1}^{\infty} f(n) n^{-s}$ can be analytically
continued to the whole $s$-plane as a meromorphic function.
Let $g(s,y)$ be a function that is entire in $s$ and 
analytic  for $y$ in an open ball of radius strictly greater than $1$.
Then
\begin{equation*}
L^{\sim}_g(f,s) = \sum_{n = 1}^{\infty} f(n) n^{-s} g(s, 1/n)
\end{equation*}
can be analytically continued to the whole $s$-plane as a meromorphic function.
\label{l-analytic-continuation-of-perturbation}
\end{lemma}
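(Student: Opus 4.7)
The plan is to expand $g(s,y)$ as a power series in $y$ around the origin and rewrite $L^{\sim}_g(f,s)$ as a series in shifted Dirichlet series $L(f,s+k)$. Concretely, since $g(s,y)$ is jointly analytic in $s$ and $y$ on $\CC \times \lbrace |y| < R \rbrace$ for some $R > 1$, we may write
\begin{equation*}
g(s, y) = \sum_{k = 0}^{\infty} g_k(s) y^k, \qquad g_k(s) = \tfrac{1}{k!} \partial_y^k g(s,0),
\end{equation*}
where each $g_k(s)$ is entire in $s$. Substituting $y = 1/n$ and formally interchanging summations gives
\begin{equation*}
L^{\sim}_g(f,s) = \sum_{n = 1}^{\infty} f(n) n^{-s} \sum_{k = 0}^{\infty} g_k(s) n^{-k}
= \sum_{k = 0}^{\infty} g_k(s) L(f, s + k).
\end{equation*}
Each $L(f, s + k)$ extends meromorphically to $\CC$ by hypothesis, so the right-hand side, once shown to converge, will give the desired continuation.

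First I would justify the interchange in the region $\op{Re}(s) > \alpha$ where $L(f,s)$ converges absolutely. Fix $r$ with $1 < r < R$; Cauchy's estimates on the disc $|y| = r$ give $|g_k(s)| \le M(s) r^{-k}$, where $M(s) = \max_{|y| = r} |g(s,y)|$ is continuous on $\CC$. Then $\sum_k |g_k(s)| n^{-k} \le M(s) \cdot rn/(rn-1)$ is bounded uniformly in $n \ge 1$, and combined with $|f(n)| \le C n^{\alpha - 1}$ the double sum converges absolutely for $\op{Re}(s) > \alpha$. Fubini then justifies the rearrangement.

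Next I would prove that $\sum_k g_k(s) L(f, s+k)$ defines a meromorphic function on all of $\CC$. Fix a compact set $A \subset \CC$ disjoint from the (finitely many in $A$, for each $k$) poles of the $L(f, s+k)$. For $k$ large enough that $\op{Re}(s) + k > \alpha + 1$ for all $s \in A$, the hypothesis $f(n) = O(n^{\alpha - 1})$ gives
\begin{equation*}
\abs{L(f, s + k)} \le \sum_{n \ge 1} \abs{f(n)} n^{-\op{Re}(s) - k} \le C',
\end{equation*}
with $C'$ independent of $k$ and $s \in A$. Combined with $|g_k(s)| \le M(s) r^{-k}$ this gives geometric decay $|g_k(s) L(f,s+k)| \le C'' r^{-k}$ uniformly on $A$, so the tail of the series converges uniformly on $A$. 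The finitely many initial terms are meromorphic on $A$ by the hypothesis on $L(f,s)$, hence the whole series is meromorphic on $A$, and thus on $\CC$. By uniqueness of analytic continuation, this function agrees with $L^{\sim}_g(f,s)$ on $\op{Re}(s) > \alpha$.

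The only subtle point, which would be the main thing to be careful about, is the joint analyticity of $g$ (needed so that the Taylor coefficients $g_k(s)$ in $y$ are themselves entire in $s$ with estimates uniform on compacta of $s$); this is automatic from the standard interpretation of the hypothesis, and holds in both applications in the paper, where $g(s,y) = (1 - \nu y^2)^{12 - s}$ or $g(s,y) = (1 - \nu y^2)^{(12 - s)/2} K_{12 - s}(2 \pi \abs{\lambda} \sqrt{2/\nu} \sqrt{1 - \nu y^2})$, both of which are jointly analytic on $\CC \times \lbrace \abs{y} < 1/\sqrt{\nu} \rbrace$ with $1/\sqrt{\nu} > 1$.
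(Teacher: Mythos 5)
Your proof is correct, and the core idea coincides with the paper's: expand $g(s,y)$ in a Taylor series in $y$ about $0$ so that $L^{\sim}_g(f,s)$ reduces to the shifted Dirichlet series $L(f,s+k)$. The execution differs in one real way. The paper truncates the expansion at a finite order $k$, writes $L^{\sim}_g(f,s)=\sum_{j=0}^{k-1}\partial_y^{(j)}g(s,0)\,L(f,s+j)+\sum_n n^{-s-k}f(n)E_k(s,1/n)$, and shows the remainder series is analytic on $\op{Re}(s)>\alpha-k$ using only that the remainder factor $E_k(s,y)$ is bounded on compacta; continuation to all of $\CC$ is then obtained one half-plane at a time by letting $k\to\infty$. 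You instead keep the full Taylor series and prove the global identity $L^{\sim}_g(f,s)=\sum_{k\geq 0}g_k(s)L(f,s+k)$ as an everywhere-convergent series of meromorphic functions. This needs two estimates the paper's truncation sidesteps: the geometric decay $\abs{g_k(s)}\leq M(s)r^{-k}$ from Cauchy's inequalities on a circle of radius $r\in(1,R)$, and a bound on $\abs{L(f,s+k)}$ uniform in large $k$ on compacta; you supply both correctly, and your Fubini justification on $\op{Re}(s)>\alpha$ is sound. What your version buys is a cleaner closed-form continuation in which the possible poles (those of the finitely many $L(f,s+j)$ meeting a given compact set) are transparent; the paper's version is marginally lighter on bookkeeping. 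The point you flag about joint analyticity of $g$ --- needed so that the coefficients $g_k(s)$ are entire and satisfy Cauchy estimates locally uniformly in $s$ --- is the right thing to worry about; it is equally implicit in the paper's proof, holds in both applications, and in any case follows from the stated separate analyticity by Hartogs' theorem.
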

\begin{proof}
Since $f(n)  = O(n^{\alpha - 1})$, 
 $L(f,s)$ converges for $\op{Re}(s) > \alpha$ uniformly on compact sets
and defines a holomorphic function in this domain. Fix $k \in \NN$.
Write the $k$-th order Taylor series expansion of $g$ in powers of $y$ valid for $y$
in a disc of radius greater than $1$:
\begin{align*}
g(s, y) 
=   \sum_{j = 0}^{k-1} \partial_y^{(j)} g(s,0) y^j  + y^kE_k(s,y) 
\text{\; where \;} 
E_k(s,y) =\sum_{j = k}^{\infty}  \partial_y^{(j)} g(s,0) y^{j-k}.
\end{align*}
Here $\partial^{(j)}_y =\tfrac{1}{j!}  \tfrac{\partial^j}{\partial y^j}$.
Substituting the Taylor series of $g$ in $L^{\sim}_g(f,s)$, we get:
\begin{align*}
L^{\sim}_g(f,s) 
=\sum_{j = 0}^{k-1} \partial_y^{(j)} g(s,0) L(f, s + j)   + 
\sum_{n = 1}^{\infty}  a_k(n,s)
\end{align*}
where
$
a_k(n,s) = n^{-s-k} f(n) E_k(s,1/n).
$
The first $k$ terms all have meromorphic continuation to $\CC$ since $L(f,s)$ does.
Note that $E_k(s, y)$ is continuous in $(s,y)$ for $y$
in the closed ball of radius $1$ around $0$ and for $s$ in any half plane.
Let $A$ be a compact subset of
$ \lbrace z \in \CC \colon \op{Re}(z+ k) > \alpha + \delta \rbrace$ for some $\delta > 0$.
There exists constants $C_1, C_2$ such that $\abs{E_k(s,1/n) )} < C_2$ for all 
$s \in A$ and all $n \in \NN$ and such that  $f(n) \leq C_1 n^{\alpha -1}$ for
all $n \in \NN$. It follows that
$\abs{ a_k(n,s) } \leq C_1 C_2/n^{1 + \delta}$.
So
$\sum_{n = 1}^{\infty}  a_k(n,s)$ converges absolutely and uniformly on $A$
and hence it is analytic on $A$.
It follows that $L^{\sim}_g(f,s) $ can be analytically continued to 
the half plane $  \op{Re}(s) > \alpha - k $
as a meromorphic function.
\end{proof}
%
%
%
%
\section{Convergence of the infinite series}
%
%
Let $X$ be a topological space. Let $\Phi$ be a countable set.
For each $r \in \Phi$, let $f_r : X \to \CC$ be a function.
Saying $\sum_{r \in \Phi} f_r(x)$ converges (resp. converges absolutely or
uniformly) means that $\sum_{n  = 1}^{\infty} f_{r(n)}(x)$ converges (resp. converges
absolutely or uniformly) for any bijection $n \mapsto r(n)$ from $\NN$ to $\Phi$,
 and that the limit does not depend on the choice of the bijection.
The goal of this appendix is to write down an elementary proof 
of lemma \ref{l-convergence}. 
\begin{lemma}
Let $L$ be an integral lattice of signature $(l,1)$ and $k > 0$.
Let $A$ be a compact set in $\op{Re}(s) > l$ and $B$ be a compact set in 
$\lbrace v \in L \otimes \RR \colon v^2 < 0 \rbrace$. 
 Then 
the series $E(z,s) = \sum_{r \in L(k)} \langle r, z \rangle^{-s}$ converges absolutely and uniformly 
 for $(z,s) \in B \times A$.
\label{l-convergence}
\end{lemma}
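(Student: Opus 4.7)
The plan is to reduce absolute convergence to a lattice-point count in a Euclidean ball of fixed radius.

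First, since the branch of logarithm satisfies $(-1)^s=\e(s/2)$, we have the pointwise bound $|\langle r,z\rangle^{-s}|\le e^{\pi|\op{Im}(s)|}|\langle r,z\rangle|^{-\op{Re}(s)}$ for any real $\langle r,z\rangle\ne 0$. Since $A$ is compact, $|\op{Im}(s)|$ is bounded and $\op{Re}(s)$ lies in a compact subset $[\sigma_-,\sigma_+]$ of $(l,\infty)$. So it suffices to prove that $\sum_{r\in L(k)}|\langle r,z\rangle|^{-\sigma}$ is uniformly bounded on $B\times[\sigma_-,\sigma_+]$.

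Next, I would bound $|\langle r,z\rangle|$ below by a fixed Euclidean norm on $L\otimes\RR$. Pick a reference vector $z_0$ with $z_0^2=-1$; since $z_0^\perp$ is positive definite, setting $|v|_E^2=\alpha^2+v_\perp^2$ for $v=\alpha z_0+v_\perp$ gives a Euclidean norm. For $r\in L(k)$ the identity $-\alpha^2+r_\perp^2=k$ yields $|r|_E^2=2\alpha^2+k$. Write $z=tz_0+w$ with $w\in z_0^\perp$; compactness of $B$ in the open cone $\{v^2<0\}$ gives $t\ge t_->0$ and $-z^2=t^2-|w|_E^2\ge\delta>0$, so $t-|w|_E\ge c_1>0$ uniformly on $B$. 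Using $\langle r,z\rangle=-t\alpha+\langle r_\perp,w\rangle$ and Cauchy-Schwarz in $z_0^\perp$, the estimate $|\langle r,z\rangle|\ge(t-|w|_E)|\alpha|-\sqrt k\,|w|_E\ge c_1|\alpha|-c_2$ yields $|\langle r,z\rangle|\ge c_3|r|_E$ uniformly for $z\in B$ once $|r|_E\ge R^*$, for a sufficiently large $R^*$ depending only on $B$ and $k$.

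The crucial step is the lattice-point count $N(R):=\#\{r\in L(k):|r|_E\le R\}=O(R^l)$. Fix a $\ZZ$-basis $(e_0,\dotsc,e_l)$ of $L$ with $e_0^2\ne 0$ (possible since $L$ is nonsingular). In coordinates $r=\sum x_ie_i$, the condition $|r|_E\le R$ forces $|x_i|\le CR$. The equation $r^2=k$, regarded as a polynomial in $x_0$, is quadratic with leading coefficient $e_0^2\ne 0$, so for each choice of $(x_1,\dotsc,x_l)$ there are at most two integer values of $x_0$, giving $N(R)\le 2(2CR+1)^l=O(R^l)$.

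Combining the three ingredients by dyadic decomposition, the tail $\sum_{r:|r|_E\ge R^*}|\langle r,z\rangle|^{-\sigma}$ is dominated by a constant multiple of $\sum_j N(2^{j+1})\,2^{-j\sigma}\ll\sum_j 2^{j(l-\sigma)}$, which converges uniformly for $\sigma$ in any compact subset of $(l,\infty)$. The finite head $\sum_{r:|r|_E<R^*}$ is handled by continuity: each $|\langle r,z\rangle|$ is a continuous nonvanishing function of $z\in B$ (using the implicit hypothesis, needed for the series even to be defined, that $B$ avoids the finitely many hyperplanes $r^\perp$ with $|r|_E<R^*$), and is therefore bounded below uniformly. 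I expect the main difficulty is the counting bound $N(R)=O(R^l)$: the naive count of all of $L\cap\{|r|_E\le R\}$ yields only $O(R^{l+1})$, and the extra power needed to reach the convergence boundary $\op{Re}(s)>l$ comes from eliminating one variable via the quadratic constraint $r^2=k$.
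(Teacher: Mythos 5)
Your proof is correct, and its skeleton --- a lower bound $\abs{\ip{r}{z}} \gtrsim \abs{r}_E$ uniform over $B$, a lattice-point count $N(R)=O(R^l)$, and a summation over shells --- matches the paper's. In particular your counting step (bound every coordinate of $r$ by $O(R)$, then use the quadratic constraint $r^2=k$ to leave at most two choices of $x_0$) is essentially the paper's Lemma \ref{l-bound-on-number-of-mirrors-intersecting-a-ball}, and your dyadic decomposition is interchangeable with the paper's Abel summation (Lemma \ref{l-summation-by-parts}). The one step you do genuinely differently is the uniform comparison over $B$: the paper identifies $\abs{\ip{r}{v}}/\sqrt{-v^2\,k}$ with $\sinh d(r^{\bot},v)$ and invokes the triangle inequality in $\cH^{l}$ to relate it to the corresponding quantity at a fixed base point $e_0$, whereas you split $V=\RR z_0\oplus z_0^{\bot}$ and run Cauchy--Schwarz by hand; your version is more elementary and self-contained (no hyperbolic geometry), at the cost of being a little less conceptual. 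Two small points to tidy: $B$ need not lie in the component of the cone where $t>0$, so replace $t$ by $\abs{t}$ or treat the two components of $\lbrace v^2<0\rbrace$ separately (compactness still gives $\abs{t}\geq t_->0$); and you are right that the finitely many $r$ with $\abs{r}_E<R^{*}$ require $\ip{r}{z}\neq 0$ on $B$ --- this hypothesis is indeed implicit in the statement, and the paper's own proof does not address it explicitly either.
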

\begin{proof}
Let $V = L \otimes \RR$. We work in the projective model of the hyperbolic space and
write
$\cH^l  = \lbrace v \RR \colon v \in V, v^2 < 0 \rbrace$ where
$v\RR$ is the line containing $v$.
Fix $e_0 \in L$ with $e_0^2 < 0$. 
Then there exists constants $C_0 > 1$ and $\delta > 0$ such that
$\sqrt{-v^2} < C_0$ and $d(v, e_0) < \delta$
for all $v \in B$. Here $d(v, e_0)$ means the hyperbolic distance between 
$v \RR$ and $e_0 \RR$ in $\cH^l$. Let $r \in L(k)$.
Write  $b_r = \abs{\ip{r}{e_0}} (-e_0^2 )^{-1/2} $.
Let $v \in B$. Using triangle inequality in $\cH^l$, we get
\begin{equation*}
\sinh (d(r^{\bot}, v)) \leq  \sinh( d(r^{\bot},e_0) + d(e_0, v))  \leq 
 \sinh\bigl( \sinh^{-1} (k^{-1/2} b_r) + \delta \bigr).
\end{equation*}
Since $\sinh( \sinh^{-1}(a) + \delta) = a \cosh(\delta) + \sqrt{1 + a^2} \sinh(\delta)
\leq a e^\delta + \sinh(\delta) $ for all $a > 0$, we obtain
\begin{equation*}
\abs{ \ip{r}{v} } (- v^2 k )^{-1/2}   \leq k^{-1/2} b_r  e^{\delta} + \sinh(\delta).
\end{equation*}
Since $\sqrt{-v^2} \leq C_0$, we find that
$\abs{ \ip{r}{v} }    \leq  (C_1 b_r + C_2)$ 
for some positive real constants $C_1, C_2$.
Fix $\epsilon > 0$ such that $\op{Re}(s) > l + \epsilon$ for all $s \in A$. Then
\begin{equation*}
\sup \lbrace \abs{ \ip{r}{v}^{-s}} \colon (v,s) \in B \times A \rbrace
 \leq  (C_1 b_r + C_2)^{- l - \epsilon}.
\end{equation*}
Lemmas \ref{l-bound-on-number-of-mirrors-intersecting-a-ball} and \ref{l-summation-by-parts} given below show that 
$\sum_{r \in L(k)}(C_1 b_r + C_2)^{- l - \epsilon}$ converges.
So 
\begin{equation*}
\sum_{r \in L(k)} \sup \lbrace \abs{ \ip{r}{v}^{-s}} \colon (v,s) \in B \times A \rbrace
\end{equation*}
converges. The lemma follows from Weierstrass $M$-test.
\end{proof}
\begin{lemma} Let $L$ be an integral lattice of signature $(l,1)$. 
Let $e_0 \in L$ be a negative norm vector. Let $k > 0$.
For each $r \in L(k)$, define
$b_r = \abs{\ip{r}{e_0}} (-e_0^2)^{-1/2}$.
Let $C_1, C_2$ be constants. Then 
$\abs{ \lbrace r \in L(k) \colon (C_1 b_r + C_2) \leq n \rbrace } = O(n^l)$.
\label{l-bound-on-number-of-mirrors-intersecting-a-ball}
\end{lemma}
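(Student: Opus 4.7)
The plan is to decompose $L \otimes \RR$ along the timelike direction $e_0$ and its positive definite complement $e_0^\bot$, and then use the standard lattice point count for a ball in $l$-dimensional Euclidean space together with the observation that the hyperboloid condition $r^2 = k$ is at most $2$-to-$1$ over that complement.

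First I would write $V = L \otimes \RR = \RR e_0 \oplus (e_0^\bot \otimes \RR)$ and decompose every $r \in L$ as $r = \alpha e_0 + w$ with $\alpha = \ip{r}{e_0}/e_0^2 \in \RR$ and $w \in e_0^\bot \otimes \RR$. Since $L$ has signature $(l,1)$ and $e_0^2 < 0$, the form restricted to $e_0^\bot \otimes \RR$ is positive definite of rank $l$. The hypothesis $C_1 b_r + C_2 \leq n$ forces $b_r = O(n)$, hence $\abs{\alpha} = b_r (-e_0^2)^{-1/2} = O(n)$; combined with the relation $w^2 = k + \alpha^2 \abs{e_0^2}$ coming from $r^2 = k$, this gives $\abs{w} = O(n)$ in the positive definite norm on $e_0^\bot \otimes \RR$.

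Next, I would introduce the $\RR$-linear projection $\pi: L \otimes \RR \to e_0^\bot \otimes \RR$ given by $\pi(v) = v - (\ip{v}{e_0}/e_0^2) e_0$, whose kernel is $\RR e_0$. The restriction $\pi|_L$ has kernel $L \cap \RR e_0$, a rank-$1$ subgroup generated by the primitive lattice vector in the $e_0$-direction; hence $\pi(L)$ is a torsion-free abelian group of rank $l$, and it is discrete in $e_0^\bot \otimes \RR$ because its kernel is spanned by a lattice vector. Thus $\pi(L)$ is a rank-$l$ lattice in the $l$-dimensional positive definite space $e_0^\bot \otimes \RR$, and by the standard ball-counting estimate, the number of $w \in \pi(L)$ with $\abs{w} \leq C n$ is $O(n^l)$ for any fixed constant $C$.

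Finally, for each such $w$, the condition $r^2 = k$ with $\pi(r) = w$ forces $\alpha^2 = (w^2 - k)/\abs{e_0^2}$, so at most two values of $\alpha$ are permitted and hence at most two $r \in L(k)$ project to a given $w$. Summing over the $O(n^l)$ admissible $w$'s yields the desired bound. The argument is essentially routine; the only point that deserves care is the verification that $\pi(L)$ is genuinely discrete of the expected rank, and this uses only that $e_0$ spans a rational line in $L \otimes \RR$.
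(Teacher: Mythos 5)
Your proof is correct and follows essentially the same route as the paper: both decompose $r$ along $e_0$ and its positive definite orthogonal complement, use $r^2=k$ together with $b_r=O(n)$ to confine the transverse component to a ball of radius $O(n)$, count $O(n^l)$ lattice points there, and note that at most two values of the $e_0$-component remain. The only difference is bookkeeping: the paper secures discreteness of the transverse coordinates by embedding $L$ in $N^{-1}(\ZZ e_0 + \dotsb + \ZZ e_l)$ for a rational orthogonal basis, whereas you work with the projected lattice $\pi(L)$ directly and justify its discreteness via the rationality of the line $\RR e_0$.
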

\begin{proof}
Let $V = L \otimes \RR$.
Extend $e_0$ to an orthogonal basis $(e_0, \dotsb, e_l)$
of $L \otimes \QQ$. So $e_1, \dotsb, e_l$ have positive norm.
There exists a positive integer $N$ such that
 $L \subseteq N^{-1} (\ZZ e_0 + \ZZ e_1 + \dotsb + \ZZ e_l)$.
Without loss, assume $n > C_2$.
Let $r \in L(k)$ such that $C_1 b_r + C_2 \leq n$.
Write $r = r_0 e_0 + r_1 e_1 + \dotsb + r_l e_l$.  Then 
$r_0^2 (-e_0^2) =   b_r^2$.
Let $C_3 = \min \lbrace e_1^2 , \dotsb, e_l^2 \rbrace > 0$.
Then the norm condition $r^2 = k$ implies
\begin{equation*}
C_3( r_1^2 + \dotsb + r_l^2) \leq \sum_{j = 1}^l r_j^2 e_j^2 = 
 k + r_0^2 (-e_0^2) \leq k + C_1^{-2} ( n - C_2 )^2.
\end{equation*}
It follows that there is a constant $C_4$ such that 
$(N r_1, \dotsb, N r_l)$ is an integer point in an Euclidean ball of radius $C_4n$.
So number of possibilities for $(r_1, \dotsb, r_l)$ is $O(n^l)$.
Once we fix $(r_1, \dotsb, r_l)$,
there are at most two choices for $r_0$, since
$r^2 = k$.
\end{proof}
\begin{lemma}
 Let $\lbrace a_{r} \colon r \in \Phi \rbrace$ be a collection of positive
real numbers indexed by a countable set $\Phi$. 
Let $\alpha_n$ be the cardinality of $\lbrace r \in \Phi \colon a_r \leq n \rbrace$.
Assume that $\alpha_n = O(n^{l})$.
Then the series $ \sum_{r \in \Phi} a_r^{-t}$ converges for all $t > l$.
\label{l-summation-by-parts}
\end{lemma}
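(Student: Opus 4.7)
The plan is to split the sum into a finite ``small'' part and an infinite tail which is controlled by summation by parts. Since $\alpha_n = O(n^l)$, in particular $\alpha_1 < \infty$, so only finitely many indices $r$ satisfy $a_r \leq 1$; the corresponding contribution $\sum_{r \colon a_r \leq 1} a_r^{-t}$ is a finite sum of finite positive numbers and causes no difficulty. For the remaining indices, I would partition the tail into the shells $\Phi_n = \lbrace r \in \Phi \colon n-1 < a_r \leq n \rbrace$ for $n \geq 2$; since $\abs{\Phi_n} = \alpha_n - \alpha_{n-1}$ and $a_r^{-t} \leq (n-1)^{-t}$ on $\Phi_n$, the tail is majorized by
\begin{equation*}
\sum_{n \geq 2} (\alpha_n - \alpha_{n-1})(n-1)^{-t}.
\end{equation*}

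A crude bound on this shell sum (using $\alpha_n - \alpha_{n-1} \leq \alpha_n = O(n^l)$) gives convergence only in the range $t > l+1$, which is one power short of the statement. To recover the sharp range $t > l$, I would apply Abel summation:
\begin{equation*}
\sum_{n=2}^{N} (\alpha_n - \alpha_{n-1})(n-1)^{-t} = \alpha_N (N-1)^{-t} - \alpha_1 + \sum_{n=2}^{N-1} \alpha_n \bigl( (n-1)^{-t} - n^{-t} \bigr).
\end{equation*}
The boundary term $\alpha_N(N-1)^{-t} = O(N^{l-t})$ tends to $0$ as $N \to \infty$ because $t > l$. By the mean value theorem, $(n-1)^{-t} - n^{-t} = O(n^{-t-1})$, so the remaining sum is bounded by a constant multiple of $\sum_{n} n^l \cdot n^{-t-1} = \sum_n n^{l-t-1}$, which converges precisely when $t > l$.

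The argument is short and elementary and there is no real obstacle; the only subtlety -- and the reason a naive shell bound is insufficient -- is that one should pair $\alpha_n$ against the discrete derivative $(n-1)^{-t} - n^{-t} \sim t\, n^{-t-1}$ rather than against $(n-1)^{-t}$ itself. This is exactly the role played by the summation by parts step, and it is the standard device for converting a counting estimate of exponent $l$ into a convergence statement at the sharp threshold $t = l$.
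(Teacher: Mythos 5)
Your proof is correct and follows essentially the same route as the paper: discard the finitely many terms with $a_r \leq 1$, group the rest into unit shells counted by $\alpha_n - \alpha_{n-1}$, and apply Abel summation so that $\alpha_n = O(n^l)$ is paired with the discrete derivative $(n-1)^{-t} - n^{-t} = O(n^{-t-1})$, giving convergence for $t > l$. The only difference is a harmless shift in the shell indexing.
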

\begin{proof} 
Ignoring finitely many terms from the series if necessary,  we may
assume without loss of generality that $a_r > 1$ for all $r \in \Phi$. 
It suffices to show that the partial sums $S_k = \sum_{r : a_r \leq k} a_{r} ^{-t}$
are bounded. Using summation by parts, we find 
\begin{equation*}
S_k
\leq  \sum_{n = 1}^{k-1} (\alpha_{n+1} - \alpha_n ) n^{-t}
= \alpha_k  k^{-t}   + 
 \sum_{n = 1}^{k-1} \alpha_{n+1} (n^{-t} - (n+1)^{-t}).
\end{equation*}
Now use $\alpha_n = O(n^l)$ and
the bound $ (n^{-t} - (n+1)^{-t}) \leq  t n^{-t -1}$. 
\end{proof}
%
%
%
%
%

%
%
\end{document}